\renewcommand{\email}[2][]{%
  \ifx\emails\@empty\relax\else{\g@addto@macro\emails{,\space}}\fi%
  \@ifnotempty{#1}{\g@addto@macro\emails{\textrm{(#1)}\space}}%
  \g@addto@macro\emails{#2}%
}
\newtheorem{theorem}{Theorem}
\newtheorem{corollary}[theorem]{Corollary}
\newtheorem{lemma}[theorem]{Lemma}
\newtheorem{definition}[theorem]{Definition}
\newtheorem{remark}[theorem]{Remark}
\numberwithin{theorem}{section}
\numberwithin{equation}{section}
\begin{document}
	\title[End-point estimates unified]{End-point estimates of the totally-geodesic Radon transform on simply connected spaces of constant curvature: A Unified Approach}
	\author[A Deshmukh]{Aniruddha Deshmukh}
%	\address[A Deshmukh]{Department of Mathematics, Indian Institute of Technology Indore, Khandwa Road, Simrol, Indore-453552, Madhya Pradesh, India.}
	\author[A Kumar]{Ashisha Kumar}
	\address[A Deshmukh, A Kumar]{Department of Mathematics, Indian Institute of Technology Indore, Khandwa Road, Simrol, Indore-453552, Madhya Pradesh, India.}
	\email[A Deshmukh]{aniruddha480@gmail.com, phd2001241001@iiti.ac.in}
	\email[A Kumar]{akumar@iiti.ac.in}
	\begin{abstract}
		In this article, we give a unified proof of the end-point estimates of the totally-geodesic $k$-plane transform of radial functions on spaces of constant curvature. The problem of getting end-point estimates is not new and some results are available in literature. However, these results were obtained independently without much focus on the similarities between underlying geometries. We give a unified proof for the end-point estimates on spaces of constant curvature by making use of geometric ideas common to the spaces of constant curvature, and obtaining a unified formula for the $k$-plane transform of radial functions. We also give some inequalities for certain special functions as an application to one of our lemmata.
	\end{abstract}
	\subjclass[2020]{Primary: 44A12, 47A30; Secondary: 43A85, 46E30}
	\keywords{totally-geodesic, $k$-plane transform, end-point estimates, constant curvature spaces, trigonometry}
	\maketitle
	\section{Introduction}
		\label{IntroductionSection}
		The problem of reconstructing an object from the knowledge of its densities along lines or planes has been of interest for the past century. This study often includes the study of Radon transform, named after the Austrian Mathematician, Johann Radon, who first introduced it in $1917$. An immediate generalization of the Radon transform is the $k$-plane transform, which integrates a given function on $k$-dimensional planes in $\mathbb{R}^n$. Formally, given a function $f: \mathbb{R}^n \rightarrow \mathbb{C}$, its (Euclidean) $k$-plane transform is defined to be the function $R_k f: \Xi_{k} \rightarrow \mathbb{C}$, given by
		$$R_k f \left( \xi \right) := \int\limits_{\xi} f \left( x \right) \mathrm{d}_{\xi}x,$$
		where $\Xi_k$ is the collection of all $k$-dimensional planes in $\mathbb{R}^n$, and $\mathrm{d}_{\xi}x$ is understood to be the $k$-dimensional Lebesgue measure on the plane $\xi$. For $k = n - 1$, the transform is called the Radon transform, while for $k = 1$, it is called the X-Ray transform. Inversion of such transforms is of significant interest, both mathematically and due to its applications.
		
		Owing to a wide range of applications, many researchers have turned their gaze to studying the $k$-plane transforms on non-Euclidean spaces. Helgason is one of the pioneers who has given a tremendous amount of input into studying Radon transforms especially on symmetric spaces. To mention a few of Helgason's works, we refer the reader to \cite{HelgasonBook} and the references therein. Berenstein et al. (\cite{BerensteinKurusa}) and Rubin (\cite{RubinInversion}) have worked on the totally-geodesic Radon transform on constant curvature spaces, where they give range characterizations, and inversion techniques.
		
		In this article, we study the boundedness of the totally-geodesic $k$-plane transforms on spaces of constant curvature. The question of boundedness of $k$-plane transforms is not new, and a lot of work has been done for the Euclidean case. In the early days of this study, Solmon proved in \cite{Solmon} that the $k$-plane transform of $L^p$-functions is well-defined for $1 \leq p < \frac{n}{k}$. Although the $k$-plane transform does not exist for certain radial functions in $L^{\frac{n}{k}} \left( \mathbb{R}^n \right)$, its existence can be obtained for functions in the Lorentz space $L^{\frac{n}{k}, 1} \left( \mathbb{R}^n \right)$. Such a comment can be made by obtaining an ``end-point" boundedness of the $k$-plane transform. For the Radon transform, such an estimate first appeared in \cite{OberlinStein} due to Oberlin and Stein. It was partially extended to the $k$-plane transform for $k \geq \frac{n}{2}$ by Drury in \cite{DruryEndPoint}. Obtaining such end-point estimate for all $1 \leq k \leq n - 1$ is difficult and is still open. However, Duoandikoetxea et al. in \cite{Duoandikoetxea} get the end-point estimate of the $k$-plane transforms of radial functions. Particularly, the authors in \cite{Duoandikoetxea} prove that for radial functions $f$ on $\mathbb{R}^n$, we have
		\begin{equation}
			\label{RnCase}
			\| R_kf \|_{\infty} \leq C \| f \|_{L^{\frac{n}{k}, 1} \left( \mathbb{R}^n \right)},
		\end{equation}
		where $L^{p, 1} \left( \mathbb{R}^n \right)$ is the Lorentz space. With this motivation, Kumar and Ray in \cite{KumarRay} approached the problem of studying the end-point behaviour on certain non-Euclidean spaces. In their paper, they proved these estimates for the hyperbolic space and sphere. We now mention the two results of our interest.
		\begin{theorem}[\cite{KumarRay}]
			For $n \geq 3$, and $2 \leq k \leq n - 1$, there is a constant $C > 0$ such that for all radial functions $f$ on $\mathbb{H}^n$,
			\begin{equation}
				\label{HnCase}
				\| \cosh \left( d \left( e_{n + 1, \cdot} \right) \right) R_kf \left( \cdot \right) \|_{L^{\infty} \left( \Xi_k \right)} \leq C \| f \|_{L^{\frac{n - 1}{k - 1}, 1} \left( \mathbb{H}^n \right)},
			\end{equation}
			where $R_k$ is the $k$-dimensional totally-geodesic Radon transform, and $\Xi_k$ is the collection of all $k$-dimensional totally-geodesic submanifolds of $\mathbb{H}^n$.
		\end{theorem}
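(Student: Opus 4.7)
The plan is to exploit the radial symmetry of $f$ together with the hyperbolic Pythagorean theorem to reduce $\cosh(d(e_{n+1},\xi))\,R_kf(\xi)$ to a one-dimensional integral against an explicit kernel, and then to estimate the result by the $L^{p,1}$--$L^{p',\infty}$ duality with $p = (n-1)/(k-1)$ and $p' = (n-1)/(n-k)$.

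\emph{Reduction to a one-dimensional integral.} Write $f(x) = f_0(d(e_{n+1},x))$. Since $R_k$ commutes with the stabilizer of $e_{n+1}$, which acts transitively on $k$-planes at a fixed distance from $e_{n+1}$, the value $R_kf(\xi)$ depends only on $s := d(e_{n+1},\xi)$. Introducing geodesic polar coordinates on $\xi$ centered at the foot of the perpendicular from $e_{n+1}$, the hyperbolic Pythagorean identity tells us that a point at intrinsic distance $r$ from that foot lies at distance $d$ from $e_{n+1}$ with $\cosh d = \cosh s \cdot \cosh r$. Hence
\[ R_kf(\xi) = \omega_{k-1}\int_0^\infty f_0\bigl(\mathrm{arccosh}(\cosh s \cosh r)\bigr)\sinh^{k-1}(r)\,dr. \]
Substituting $u = \cosh s \cosh r$ and multiplying through by $\cosh s$ absorbs one power of $\cosh s$ into the kernel and yields
\[ \cosh s \cdot R_kf(\xi) = \frac{\omega_{k-1}}{\cosh^{k-2}(s)}\int_s^\infty f_0(r)\,(\cosh^2 r - \cosh^2 s)^{(k-2)/2}\sinh(r)\,dr, \]
which I rewrite as $\int_0^\infty f_0(r)\,K_s(r)\,\omega_{n-1}\sinh^{n-1}(r)\,dr$ for an explicit kernel $K_s$ supported on $(s,\infty)$.

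\emph{End-point estimate via Lorentz duality.} For a radial function, $\|f\|_{L^{p,1}(\mathbb{H}^n)}$ equals the Lorentz norm of $f_0$ with respect to the radial measure $\omega_{n-1}\sinh^{n-1}(r)\,dr$. Hunt's inequality then gives
\[ \cosh s \cdot R_kf(\xi) \leq \|f\|_{L^{p,1}(\mathbb{H}^n)}\,\|K_s\|_{L^{p',\infty}(\omega_{n-1}\sinh^{n-1}(r)\,dr)}, \]
and the theorem follows once $\|K_s\|_{L^{p',\infty}}$ is shown to be bounded independently of $s\geq 0$.

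\emph{The main obstacle} is precisely this uniform weak-type bound. Explicitly,
\[ K_s(r) = C_{n,k}\,\frac{(\cosh^2 r - \cosh^2 s)^{(k-2)/2}}{\cosh^{k-2}(s)\,\sinh^{n-2}(r)}\,\chi_{(s,\infty)}(r). \]
Near $r = s$ the kernel vanishes like $(r-s)^{(k-2)/2}$, while for large $r$ it decays like $e^{(k-n)r}/\cosh^{k-2}(s)$; the explicit $\cosh^{-(k-2)}(s)$ prefactor is precisely what makes the distribution bound $\mu(\{K_s > \lambda\})\lesssim \lambda^{-p'}$ uniform in $s$. A clean way to see this is the substitution $v = \cosh r/\cosh s$, which renormalizes $K_s$ to an $s$-independent profile on $(1,\infty)$ and dominates the radial measure by a universal one. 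I expect to package this uniform weak-$L^{p'}$ bound as a self-contained lemma about a one-parameter family of elementary special functions; the identical lemma, applied with $\cosh\leftrightarrow\cos$ and $\sinh\leftrightarrow\sin$ (or with the Euclidean Pythagorean theorem replacing the hyperbolic one), will then deliver the spherical and Euclidean analogues and produce the unified proof promised in the abstract.
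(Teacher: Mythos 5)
Your reduction to a one-dimensional kernel is correct: the hyperbolic Pythagoras relation $\cosh d=\cosh s\,\cosh r$, the polar decomposition of $\xi$ about the foot of the perpendicular, and the resulting identity $\cosh s\,R_kf(\xi)=\frac{\omega_{k-1}}{\cosh^{k-2}s}\int_s^\infty f_0(r)\,(\cosh^2 r-\cosh^2 s)^{(k-2)/2}\sinh r\,dr$ all check out, and the duality framework is sound in principle: since $(L^{p,1})^*=L^{p',\infty}$ and $K_s\ge 0$, your theorem is in fact \emph{equivalent} to the uniform bound $\sup_{s\ge 0}\|K_s\|_{L^{p',\infty}(\omega_{n-1}\sinh^{n-1}r\,dr)}<\infty$ with $p'=\frac{n-1}{n-k}$. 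But that bound is exactly where you stop: you record asymptotics of $K_s$ near $r=s$ and at infinity and say you ``expect to package'' the uniform weak-type estimate as a lemma. That lemma is the entire analytic content of the end-point estimate; without it your argument only restates the theorem in dual form. The two points you wave at are precisely the delicate ones: near $r=s$ the kernel is not simply $(r-s)^{(k-2)/2}$ but carries the $s$-dependent factor $(\cosh s\sinh s)^{(k-2)/2}$, and under your renormalization $v=\cosh r/\cosh s$ the measure $\sinh^{n-1}r\,dr$ acquires a factor growing like $\cosh^{n-1}s$, so it is not ``dominated by a universal one''; the competition between this growth and the prefactor $\cosh^{-(k-2)}s$ is exactly what forces the hyperbolic exponent $p=\frac{n-1}{k-1}$ rather than $\frac{n}{k}$, and it must be computed, not asserted. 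Your closing claim that the \emph{identical} lemma with $\cosh\leftrightarrow\cos$ yields the spherical and Euclidean cases should itself be a warning sign: there the end-point is $p=\frac{n}{k}$, so the weak-type exponent changes, and a literally identical statement cannot serve all three geometries.

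For comparison, the paper avoids Lorentz duality entirely. It reduces the $L^{p,1}\to L^\infty$ claim to characteristic functions of radial sets (for which $\|\chi_E\|_{L^{p,1}}=\mu(E)^{1/p}$), writes the transform via the unified formula of Theorem \ref{TGRTRadialFunction}, bounds the bracketed factor $\bigl[1-\bigl(\tfrac{(\ln s_c)'(t)}{(\ln s_c)'(d(o,\xi))}\bigr)^2\bigr]^{\frac{k}{2}-1}$ by $1$ for $k\ge 2$ using the monotonicity of $(\ln s_c)'$, and then applies the elementary one-dimensional comparison of Lemma \ref{MainLemmaUnified} and Corollary \ref{SCEstimate}, $\int \chi\, s_c^{k-1}\,dt\le K\bigl(\int \chi\, s_c^{n-1}\,dt\bigr)^{1/p}$, in which the admissible exponent is forced by the powers of $s_c$ and equals $\frac{n-1}{k-1}$ when $c=-1$. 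If you wish to complete your route you would essentially have to prove a weak-type analogue of that one-dimensional inequality for your explicit kernel; alternatively, you can keep your kernel formula and finish as the paper does by testing only against characteristic functions, which sidesteps the uniform weak-$L^{p'}$ computation altogether.
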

		\begin{remark}
			\label{RemarkHnEndPoint}
			\normalfont
			It is to be noted that for $k = 1$, we do not expect any boundedness estimate at the end-point, which is $p = + \infty$. This is because it is known that there are radial functions (say, constants) for which the $k$-plane transform is not defined. Also, in the case $p = + \infty$, there are no non-trivial Lorentz spaces $L^{p, q}$, for $q < + \infty$.
		\end{remark}
		\begin{theorem}[\cite{KumarRay}]
			For $1 \leq k \leq n - 1$, and radial functions $f$ on the sphere $\mathbb{S}^n$, the following inequality holds
			\begin{equation}
				\label{SnCase}
				\| \cos \left( d \left( e_{n + 1, \cdot} \right) \right) R_kf \left( \cdot \right) \|_{L^{\infty} \left( \Xi_k \right)} \leq C \| f \|_{L^{\frac{n}{k}, 1} \left( \mathbb{S}^n \right)},
			\end{equation}
			where $R_k$ is the $k$-dimensional totally-geodesic Radon transform, and $\Xi_k$ is the collection of all $k$-dimensional totally-geodesic submanifolds of $\mathbb{S}^n$.
		\end{theorem}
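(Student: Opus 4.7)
The plan is to proceed along the same broad strategy as \cite{Duoandikoetxea} and the hyperbolic argument in \cite{KumarRay}, but with the Euclidean/hyperbolic Pythagorean relations replaced by their spherical analogue. The starting point is the geometric fact that any totally-geodesic $k$-plane $\xi \subset \mathbb{S}^n$ is a great $k$-sphere, so it admits a foot-of-perpendicular $p \in \xi$ realising the distance $\theta := d(e_{n+1}, \xi) \in [0, \pi/2]$. For any $x \in \xi$ with $s = d(p, x)$, spherical trigonometry applied to the right-angled triangle with vertices $e_{n+1}, p, x$ yields the identity
\[
\cos\bigl(d(e_{n+1}, x)\bigr) = \cos\theta \,\cos s.
\]
This is the spherical counterpart of $\cosh r = \cosh\theta \cosh s$ on $\mathbb{H}^n$ and $r^2 = \theta^2 + s^2$ on $\mathbb{R}^n$, which is how the unified treatment becomes possible.

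First I would parametrise $\xi$ by geodesic polar coordinates $(s, \omega) \in [0,\pi] \times \mathbb{S}^{k-1}$ centred at $p$, so that the induced $k$-dimensional measure on $\xi$ factors as $\sin^{k-1}(s)\, ds\, d\omega$. Writing the radial function as $f(x) = F(d(e_{n+1}, x))$ and using the above identity to substitute $r = d(e_{n+1}, x)$ for $s$ (via $\sin s\, ds = \frac{\sin r}{\cos\theta}\, dr$ and $\sin^2 s = (\cos^2\theta - \cos^2 r)/\cos^2\theta$) one obtains the single-integral representation
\[
R_k f(\xi) \;=\; \frac{|\mathbb{S}^{k-1}|}{\cos^{k-1}\theta}\int_{\theta}^{\pi-\theta} F(r)\,(\cos^2\theta - \cos^2 r)^{(k-2)/2}\,\sin r\, dr,
\]
which is valid for all $1 \leq k \leq n-1$ and has obvious Euclidean/hyperbolic analogues. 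Multiplying by $\cos\theta$ leaves a factor $\cos^{2-k}\theta$ on the right.

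Next, I would reduce the estimate to the case of a characteristic function of a geodesic cap. Using the layer-cake formula $F = \int_0^\infty \chi_{\{F>t\}}\, dt$ and the monotone-radial reduction, the problem collapses to proving the geometric inequality
\[
\cos\theta\; R_k\!\left(\chi_{B(e_{n+1}, r)}\right)(\xi) \;\leq\; C\, |B(e_{n+1}, r)|^{k/n}
\qquad (\xi \in \Xi_k,\; r \in (0, \pi)),
\]
uniformly in $\theta = d(e_{n+1}, \xi)$, after which integrating in the level parameter $t$ and recognising $\int_0^\infty |\{f>t\}|^{k/n}\, dt$ as (a constant multiple of) $\|f\|_{L^{n/k, 1}(\mathbb{S}^n)}$ finishes the proof. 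Via the substitution $u = \cos r$ the left-hand side becomes (up to constants)
\[
\frac{\cos\theta}{\cos^{k-1}\theta}\int_{\cos r}^{\cos\theta}(\cos^2\theta - u^2)^{(k-2)/2}\, du,
\]
while $|B(e_{n+1}, r)| = |\mathbb{S}^{n-1}|\int_0^r \sin^{n-1}\rho\, d\rho$.

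The main obstacle is precisely this last geometric inequality. The factor $\cos^{2-k}\theta$ blows up as $\theta \to \pi/2$, so one must show that the trigonometric integral decays at a matching rate as $\theta$ approaches the support of $\chi_{B(e_{n+1}, r)}$ from outside, and on the other hand is controlled by $r^k$ (up to the appropriate power of $|B_r|$) in the small-radius regime. I expect to handle this by splitting into the cases $r \leq \pi/2$ and $r \geq \pi/2$, bounding $(\cos^2\theta - \cos^2 r)^{(k-2)/2} \leq (\cos^2\theta - \cos^2 r)_+^{(k-2)/2}$ by an elementary factorisation $(\cos\theta - \cos r)(\cos\theta + \cos r)$, and then proving sharp inequalities between $\int_\theta^{\min(r,\pi-\theta)} (\cos^2\theta - \cos^2 \rho)^{(k-2)/2}\sin\rho\, d\rho$ and $\bigl(\int_0^r \sin^{n-1}\rho\, d\rho\bigr)^{k/n}$. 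These comparison inequalities for incomplete Beta-type integrals are presumably the special-function inequalities advertised in the abstract, and proving them (with constants independent of $\theta$ and $r$) is the heart of the argument; everything else is bookkeeping built around the single unified identity $\cos r = \cos\theta \cos s$.
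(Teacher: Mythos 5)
Your derivation of the spherical Abel-type formula is sound and matches the paper's unified formula \eqref{RadialFunctionEquation} specialized to $c=1$ (the identity $\cos r=\cos\theta\cos s$ is exactly the $c=1$ case of the unified Pythagoras relation used in Theorem \ref{TGRTRadialFunction}). The gaps are in the second half. First, your reduction is to characteristic functions of geodesic caps $B(e_{n+1},r)$ only. The layer-cake/Stein--Weiss reduction for an $L^{n/k,1}\to L^\infty$ bound gives characteristic functions of \emph{arbitrary} radial sets $E=\{|f|>t\}$, which are unions of annuli, not caps; and no rearrangement principle lets you pass from caps to general radial sets here, because the relevant weight ratio $(\cos^2\theta-\cos^2 r)^{(k-2)/2}/\sin^{n-2}r$ vanishes at $r=\theta$ and is not monotone for $k>2$, so the cap is not the extremal set for fixed $\theta$. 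This is precisely why the paper's Lemma \ref{MainLemmaUnified} and Corollary \ref{SCEstimate} (and the Kumar--Ray inequality \eqref{I3} they rest on) are formulated for characteristic functions of arbitrary measurable sets. Second, the comparison inequality you defer (``I expect to handle\dots'') \emph{is} the theorem; moreover the crude bound you propose is the wrong one: estimating $(\cos^2\theta-\cos^2 r)^{(k-2)/2}\le\cos^{k-2}\theta$ to cancel the prefactor leaves you needing $\int_E\sin r\,dr\lesssim\bigl(\int_E\sin^{n-1}r\,dr\bigr)^{k/n}$, which fails for small caps when $k>2$ (test $E=(0,\epsilon)$: $\epsilon^2\not\lesssim\epsilon^k$). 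The paper instead bounds the dimensionless bracket $\bigl[1-(\cot t\,\tan\theta)^2\bigr]^{k/2-1}\le 1$, keeping $\sin^{k-1}t$ in the integrand, and then applies the set-level comparison $\int_A\sin^{k-1}t\,dt\le K\bigl(\int_A\sin^{n-1}t\,dt\bigr)^{k/n}$ of Corollary \ref{SCEstimate}.

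Third, the statement includes $k=1$, where your scheme breaks down: the kernel exponent is $-\tfrac12<0$, so the bracket cannot be majorized by a constant and the singularity at $r=\theta$ must be exploited, not discarded. The paper treats this case separately (Theorem \ref{XRayEndPointUnified}) by explicit substitutions that convert the integral into the two-weight form $t^{-1/2}(1-ct)^{-1/2}$ and then invoke Lemma \ref{MainLemmaUnified} with $p=n$, $\eta_1=\eta_2=1$; the weight $\cos\theta$ is consumed by the substitution, not by a pointwise kernel bound. So, as written, your proposal reproduces the paper's formula but leaves unproved (and in part misdirected) exactly the two ingredients that constitute the proof: the comparison inequality for arbitrary radial sets and the separate $k=1$ argument.
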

		Upon a close inspection of Equations \eqref{RnCase}, \eqref{HnCase}, and \eqref{SnCase}, we observe that the coefficients involved with $R_kf$ are closely related to the curvatures of the three spaces under consideration. Therefore, it seems to us that the coefficients must come naturally as we try to get these end-point estimates. However, the proofs of these results, presented in \cite{Duoandikoetxea} and \cite{KumarRay} are independent.
		
		Motivated from our observation about the curvature, we give a proof of the end-point results without getting into the group-theoretic or coordinate approach, but by using the differential geometry of constant curvature spaces. The crux of the proof we present lies in the fact that we can get a unified formula for the $k$-dimensional totally-geodesic Radon transform on constant curvature spaces. This formula shows us that the coefficients involved in Equations \eqref{RnCase}, \eqref{HnCase}, and \eqref{SnCase} are natural to curvature. We also see that once we have the unified formula, the proof of the estimate becomes much simpler.
		
		The article is organized as follows: Section \ref{PreliminariesSection} deals with the preliminaries from differential geometry. In section \ref{MainSection} we give the unified proof of the formula of totally-geodesic $k$-plane transform of radial functions, and prove the end-point estimates. Next, in Section \ref{SpecialFunctionSection}, we give some inequalities for hypergeometric functions as an application to one of our lemma. Lastly, we give some concluding remarks and future scope of work in Section \ref{ConclusionSection}.
	\section{Preliminaries from Differential Geometry}
		\label{PreliminariesSection}
		In this section we brief through important results from Riemannian geometry that we use to get the unified formula for the totally-geodesic $k$-plane transform of radial functions. We assume that the reader is well-equipped with the theory of smooth manifolds, and we begin by introducing Riemannian manifolds. The definitions and results that follow are taken from \cite{LeeRM}, and we present them in a form convenient to us.
		\begin{definition}[Riemannian manifolds]
			Let $M$ be a smooth manifold. A \textbf{Riemannian metric} on $M$ is a section $g$ of the symmetric tensor bundle that is positive definite at each point. A smooth manifold with a prescribed Riemannian metric is known as a \textbf{Riemannian manifold}.
		\end{definition}
		\begin{remark}
			\normalfont
			In simple terms, a Riemannian manifold is a smooth manifold such that on each tangent space we have prescribed an inner product. We require this prescription to vary smoothly.
		\end{remark}
		With the introduction of a Riemannian metric on a smooth manifold, various constructions can be done. Some of them include geodesics, distance function, exponential maps, volume forms and measures. We refer the reader to \cite{LeeRM} for an extensive treatise on this topic. Of our interest, however, are those submanifolds which behave like planes of the Euclidean space. Based on the observation that the lines of lower-dimensional planes in the Euclidean space are also lines of the universe ($\mathbb{R}^n$), we look at those submanifolds whose geodesics are geodesics of the parent manifold. Formally, these submanifolds are called totally-geodesic and are defined as follows.
		\begin{definition}[Totally-Geodesic Submanifolds]
			Let $\left( M, g \right)$ be a Riemannian manifold and $S \subseteq M$ be an embedded submanifold of $M$. Then, $S$ is totally-geodesic if the geodesics in $S$ (with respect to the induced metric) are also geodesics of $M$.
		\end{definition}
		\begin{remark}
			\normalfont
			We remark that singleton sets of a Riemannian manifold are embedded submanifolds. However, being of dimension zero, they have the trivial tangent space, and no non-trivial geodesics to work with. Therefore, vacously, singleton sets become totally-geodesic submanifolds.
		\end{remark}
		On a Riemannian manifold, one can define various curvature tensors (see \cite{LeeRM} for details). In this article we deal with those manifolds which have constant \textbf{sectional curvature}. The classification of such spaces is well-known, and can be found in \cite{LeeRM}. We state the result here for reference.
		\begin{theorem}[Classification of simply-connected spaces of constant curvature]
			\label{Classification}
			Let $\left( M, g \right)$ be a simply-connected Riemannian manifold of constant sectional curvature $c$. Then, $M$ is isometric to one of the spaces: $\mathbb{R}^n$, $\mathbb{H}^n_R$, and $\mathbb{S}^n_R$, depending on if $c = 0$, $c = - \frac{1}{R^2} < 0$, or $c = \frac{1}{R^2} > 0$, respectively.
		\end{theorem}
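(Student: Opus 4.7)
The plan is to prove this classical Killing--Hopf classification by constructing an explicit isometry via the exponential map, with Cartan's theorem on local isometries as the main comparison tool. First, I would recall that each of the three candidate spaces $\mathbb{R}^n$, $\mathbb{H}^n_R$, and $\mathbb{S}^n_R$ is a complete, simply-connected Riemannian manifold whose sectional curvature is respectively $0$, $-1/R^2$, and $1/R^2$ (a standard computation in Lee's text). Given $(M, g)$ of constant sectional curvature $c$, I would choose the unique candidate $\widetilde{M}$ sharing the same curvature and construct a global isometry $M \to \widetilde{M}$.

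The central step is Cartan's theorem on local isometries. Fix base points $p \in M$ and $\tilde{p} \in \widetilde{M}$ together with a linear isometry $L \colon T_p M \to T_{\tilde{p}} \widetilde{M}$. Constant sectional curvature $c$ pins down the Riemann curvature tensor to the model form $R(X,Y)Z = c\bigl(\langle Y, Z\rangle X - \langle X, Z\rangle Y\bigr)$, so the curvature tensor of $\widetilde{M}$ at $\tilde{p}$ is $L$-related to that of $M$ at $p$. Cartan's theorem then produces a local isometry on a normal neighborhood, expressible as $\phi = \exp_{\tilde{p}} \circ L \circ \exp_p^{-1}$.

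To globalize, I would split into cases. For $c \leq 0$, the Cartan--Hadamard theorem, together with completeness of $M$ (which follows from the Hopf--Rinow theorem once one knows geodesics do not run into conjugate points, guaranteed by $c \leq 0$) and of the model space, makes $\exp_p$ and $\exp_{\tilde{p}}$ diffeomorphisms on all of the tangent spaces; hence the above formula defines a global diffeomorphism, which one checks is an isometry by running Cartan along each radial geodesic. For $c > 0$, the exponential map is no longer injective, and one instead shows that $\phi$ extends to a Riemannian covering map $\widetilde{M} \to M$; simply-connectedness of $M$ forces any such covering to be a diffeomorphism and hence a global isometry.

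The main obstacle is this last case, $c > 0$: the positive curvature setting is exactly where one loses global injectivity of the exponential map and must appeal to covering-space theory. A clean uniform way to finish both cases is by analytic continuation along paths: given $q \in M$, one builds a chain of overlapping normal neighborhoods from $p$ to $q$, composes the local Cartan isometries, and invokes simple-connectedness via the monodromy theorem to show the resulting extension is path-independent. This yields a globally defined smooth isometry $M \to \widetilde{M}$, completing the classification.
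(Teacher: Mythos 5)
The paper does not prove this theorem at all: it is quoted as a classical fact (Killing--Hopf) with a citation to Lee's \emph{Introduction to Riemannian Manifolds}, and your outline---Cartan's local isometry theorem at a base point, globalization by analytic continuation along paths using simple-connectedness, and a covering-space argument in positive curvature---is essentially the standard proof given in that reference. So the overall strategy is the right one.

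There is, however, a genuine gap: completeness. The theorem, as proved in the cited source, assumes $(M,g)$ is \emph{complete}; the paper's statement omits this hypothesis, and without it the statement is literally false (an open ball in $\mathbb{R}^n$ with the flat metric is simply connected of constant curvature $0$ but not isometric to $\mathbb{R}^n$; an open geodesic disc in $\mathbb{S}^n_R$ gives the analogous counterexample for $c>0$). Your attempt to manufacture completeness---``completeness of $M$ \dots follows from the Hopf--Rinow theorem once one knows geodesics do not run into conjugate points, guaranteed by $c \leq 0$''---does not work: absence of conjugate points says nothing about geodesics being extendible for all time, and Hopf--Rinow takes geodesic or metric completeness as its \emph{hypothesis}, it does not produce it (the flat open ball has no conjugate points and is not complete). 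The same hypothesis is silently used elsewhere in your argument: the monodromy/continuation step needs geodesics of $M$ (or of the model, mapped into $M$) to extend so that the local Cartan isometries can be pushed along a chain of normal neighborhoods, and in the $c>0$ case the standard lemma that a local isometry out of a complete manifold is a Riemannian covering is what makes the map $\widetilde{M} \to M$ a covering; if instead you build the local isometry $M \to \widetilde{M}$, you need completeness of $M$ to conclude surjectivity/covering rather than just a local isometry (again the open ball inclusion is the counterexample). The fix is simply to state completeness as a hypothesis, as in the cited classification; with that added, your outline is correct and matches the standard argument.
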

		\begin{remark}
			\normalfont
			Theorem \ref{Classification} is of fundamental importance in Differential Geometry, and to us. It says that apart than the three ``model spaces", there are no other (simply-connected) manifolds of constant curvature. Due to this result, it is enough to study the $k$-plane transform only on the model spaces $\mathbb{R}^n$, $\mathbb{H}^n_R$, and $\mathbb{S}^n_R$. In fact, since $\mathbb{H}^n_R$ is diffeomorphic to $\mathbb{H}^n$, and $\mathbb{S}^n_R$ is diffeomorphic to $\mathbb{S}^n$ through an orientation preserving diffeomorphism (given by simply a rescaling of the space), the measures on these spaces are only a scaling of each other. Hence, it is enough to study the totally-geodesic Radon transforms on $\mathbb{R}^n$, $\mathbb{H}^n$, and $\mathbb{S}^n$, which have curvatures $c = 0, -1$, and $+1$, respectively.
			\end{remark}
			One of the important tools on which the proof we present depends is the polar decomposition of the measure on a manifold of constant curvature. For this, we define the following function, that we call ``curvature function". This function comes naturally when studying Jacobi fields on constant curvature spaces (see \cite{LeeRM}), as solution to the differential equation
			\begin{equation}
				\label{CurvatureDE}
				u'' \left( t \right) + c u \left( t \right) = 0,
			\end{equation}
			with the initial conditions $u \left( 0 \right) = 0$, and $u' \left( 0 \right) = 1$. Explicitly, the solution of Equation \eqref{CurvatureDE} with the aforementioned initial conditions is given by
			\begin{equation}
				\label{CurvatureFunction}
				s_c \left( t \right) := \begin{cases}
											t, & \text{if } c = 0. \\
											R \sinh \frac{t}{R}, & \text{if } c = -\frac{1}{R^2} < 0. \\
											R \sin \frac{t}{R}, & \text{if } c = +\frac{1}{R^2} > 0.
										\end{cases}
			\end{equation}
			Henceforth, we are interested only in the case when $c \in \left\lbrace -1, 0, 1 \right\rbrace$. That is, we consider
			\begin{equation}
				\label{OurCurvatureFunction}
				s_c \left( t \right) = \begin{cases}
											t, & \text{if } c = 0. \\
											\sinh t, & \text{if } c = -1. \\
											\sin t, & \text{if } c = +1.
										\end{cases}
			\end{equation}
			The main aim of this article is to unify the formula for the $k$-plane transform of radial functions on constant curvature spaces. This involves various properties of the function $s_c$ mentioned in Equation \eqref{OurCurvatureFunction}. While those properties are easily derivable by fixing a $c$, and using the knowledge of trigonometry, we give certain proofs that do not invoke trigonometry but rather only Equation \eqref{CurvatureDE}.
			
			First, we observe that the function $s_c$ must be odd. To see this, consider the function $v \left( t \right) = s_c \left( - t \right)$. Clearly, $v$ is a solution to Equation \eqref{CurvatureDE} with the initial conditions $v \left( 0 \right) = 0$ and $v' \left( 0 \right) = -1$. From the uniqueness of solutions to differential equations, we have $v \left( t \right) = K s_c \left( t \right)$, for some $K \in \mathbb{R}$. It is clear from the initial conditions that $K = -1$.
			
			We know that the function $s_c$, given in Equation \eqref{OurCurvatureFunction}, satisfies Equation \eqref{CurvatureDE}. By multiplying $2 s_c' \left( t \right)$ on both sides, it is easy to see that
			$$\left( \left( s_c' \right)^2 \left( t \right) + cs_c^2 \left( t \right) \right)' = 0.$$
			Consequently, we must have for any $t \in \mathbb{R}$,
			$$\left( s_c' \right)^2 \left( t \right) + cs_c^2 \left( t \right) = K,$$
			for some constant $K \in \mathbb{R}$. From the initial conditions $s_c \left( 0 \right) = 0$ and $s_c' \left( 0 \right) = 1$, it is clear that $K = 1$. That is, the function $s_c$ satisfies the following non-linear differential equation.
			\begin{equation}
				\label{ScNonLinDE}
				\left( s_c' \right)^2 + cs_c^2 = 1.
			\end{equation}
			A general solution to Equation \eqref{CurvatureDE} is given by
			$$u \left( t \right) = A s_c \left( t \right) + B s_c' \left( t \right),$$
			where $A$ and $B$ are uniquely determined by the initial conditions. Now, by taking the initial conditions as $u \left( 0 \right) = s_c \left( r \right)$ and $u' \left( 0 \right) = s_c' \left( r \right)$, for a fixed $r \in \mathbb{R}$, we get
			$$u \left( t \right) = s_c \left( t \right) s_c' \left( r \right) + s_c \left( r \right) s_c' \left( t \right).$$
			However, by the uniqueness of solutions to initial value problems, we must have $u \left( t \right) = s_c \left( t + r \right)$. That is, we have the following ``\textit{double-angle formula}".
			\begin{equation}
				\label{DoubleAngleFormula2}
				s_c \left( t + r \right) = s_c \left( t \right) s_c' \left( r \right) + s_c \left( r \right) s_c' \left( t \right).
			\end{equation}
			Differentiating Equation \eqref{DoubleAngleFormula2}, and using Equation \eqref{CurvatureDE}, we obtain
			\begin{equation}
				\label{DoubleAngleFormula5}
				s_c' \left( t + r \right) = s_c' \left( t \right) s_c' \left( r \right) - c s_c \left( t \right) s_c \left( r \right). 
			\end{equation}
			As a special case of Equation \eqref{DoubleAngleFormula2}, by taking $r = t$, we get
			\begin{equation}
				\label{DoubleAngleFormula1}
				s_c \left( 2t \right) = 2 s_c \left( t \right) s_c' \left( t \right).
			\end{equation}
			Similarly, by taking $r = t$ in Equation \eqref{DoubleAngleFormula5} and using Equation \eqref{ScNonLinDE}, we obtain
			\begin{equation}
				\label{DoubleAngleFormula3}
				s_c' \left( 2t \right) = 1 - 2cs_c^2 \left( t \right).
			\end{equation}
			Lastly, we consider the following. By using Equations \eqref{DoubleAngleFormula2} and \eqref{DoubleAngleFormula5}, we have
			\begin{align*}
				2s_c \left( \frac{t + r}{2} \right) s_c' \left( \frac{t - r}{2} \right) &= 2 \left[ s_c \left( \frac{t}{2} \right) s_c' \left( \frac{r}{2} \right) + s_c \left( \frac{r}{2} \right) s_c' \left( \frac{t}{2} \right) \right] \left[ s_c' \left( \frac{t}{2} \right) s_c' \left( \frac{r}{2} \right) + cs_c \left( \frac{t}{2} \right) s_c \left( \frac{r}{2} \right) \right] \\
				&= 2 \left[ s_c \left( \frac{t}{2} \right) s_c' \left( \frac{t}{2} \right) \left( s_c' \right)^2 \left( \frac{r}{2} \right) + cs_c^2 \left( \frac{t}{2} \right) s_c \left( \frac{r}{2} \right) s_c' \left( \frac{r}{2} \right) \right. \\
				&\left. + s_c \left( \frac{r}{2} \right) s_c' \left( \frac{r}{2} \right) \left( s_c' \right)^2 \left( \frac{t}{2} \right) + cs_c^2 \left( \frac{r}{2} \right) s_c \left( \frac{t}{2} \right) s_c' \left( \frac{t}{2} \right) \right].
			\end{align*}
			Using Equation \eqref{DoubleAngleFormula1} and simplifying, we get
			\begin{align*}
				2s_c \left( \frac{t + r}{2} \right) s_c' \left( \frac{t - r}{2} \right) &= s_c \left( t \right) \left[ \left( s_c' \right)^2 \left( \frac{r}{2} \right) + cs_c^2 \left( \frac{r}{2} \right) \right] + s_c \left( r \right) \left[ \left( s_c' \right)^2 \left( \frac{t}{2} \right) + c s_c^2 \left( \frac{t}{2} \right) \right].
			\end{align*}
			Finally, from Equation \eqref{ScNonLinDE}, we have,
			\begin{equation}
				\label{DoubleAngleFormula4}
				s_c \left( t \right) + s_c \left( r \right) = 2 s_c \left( \frac{t + r}{2} \right) s_c \left( \frac{t - r}{2} \right).
			\end{equation}
			One significant application of the function $s_c$ is the decomposition of the measure on constant curvature spaces into a ``radial" part and an ``angular" part. The next result that can be found in \cite{LeeRM} is the precise statement for polar decomposition.
			\begin{theorem}[Polar Decomposition \cite{LeeRM}]
				Let $\left( M, g \right)$ be a Riemannian manifold of constant curvature $c$, and let $U \subseteq M$ be a geodesic ball centered at a point $p \in M$ and of radius $b > 0$. Then, for any integrable function $f$ on $U$, we have
				\begin{equation}
					\label{PD}
					\int\limits_{U} f \ \mathrm{d}V_g = \int\limits_{\mathbb{S}^{n - 1}} \int\limits_{0}^{b} f \left( \exp_p \left( r \omega \right) \right) s_c^{n - 1} \left( r \right) \mathrm{d}r \mathrm{d}\omega.
				\end{equation}
				Here, $r$ denotes the radial distance of points in the geodesic ball $U$ from $p$, and $\omega$ is their direction.
			\end{theorem}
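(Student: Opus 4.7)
The plan is to use the change of variables induced by the exponential map and then evaluate the Jacobian via Jacobi fields, where the hypothesis of constant sectional curvature will let the function $s_c$ appear. Since $U$ is a geodesic ball of radius $b$ centered at $p$, the map $\exp_p$ restricts to a diffeomorphism from the open ball of radius $b$ in $T_pM$ (with respect to the inner product $g_p$) onto $U$. Setting $J(v) := \left| \det \left( d\exp_p \right)_v \right|$, the standard change-of-variables formula gives
\[
\int\limits_{U} f \, \mathrm{d}V_g = \int\limits_{\left| v \right| < b} \left( f \circ \exp_p \right)(v) \, J(v) \, \mathrm{d}v,
\]
where $\mathrm{d}v$ is Lebesgue measure on the Euclidean space $\left( T_pM, g_p \right)$. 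Writing $v = r\omega$ with $r \in (0,b)$ and $\omega \in \mathbb{S}^{n-1} \subseteq T_pM$, the usual polar decomposition on a Euclidean space yields $\mathrm{d}v = r^{n-1} \mathrm{d}r \, \mathrm{d}\omega$, so the result will follow once we show $J(r\omega) = s_c^{n-1}(r) / r^{n-1}$.

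To compute this Jacobian, I would fix $\omega \in \mathbb{S}^{n-1}$, complete it to a $g_p$-orthonormal basis $\left\{ e_1, \ldots, e_{n-1}, \omega \right\}$ of $T_pM$, and consider the unit-speed geodesic $\gamma(r) := \exp_p(r\omega)$. A standard identity (Gauss--Jacobi) gives $(d\exp_p)_{r\omega}(re_i) = J_i(r)$, where $J_i$ is the Jacobi field along $\gamma$ with $J_i(0) = 0$ and $J_i'(0) = e_i$; similarly $(d\exp_p)_{r\omega}(\omega) = \gamma'(r)$. Because $(M,g)$ has constant sectional curvature $c$, the Jacobi equation $J_i'' + R(J_i,\gamma')\gamma' = 0$ decouples into tangential and normal components, and each normal component satisfies the scalar ODE $J_i'' + c J_i = 0$, which is precisely Equation \eqref{CurvatureDE}. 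Coupled with the initial conditions $J_i(0) = 0$, $J_i'(0) = e_i$, uniqueness forces $J_i(r) = s_c(r) E_i(r)$, where $E_i$ denotes the parallel transport of $e_i$ along $\gamma$.

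Therefore, relative to the $g_p$-orthonormal frame $\left\{ e_1, \ldots, e_{n-1}, \omega \right\}$ on the source side and the $g$-orthonormal frame $\left\{ E_1(r), \ldots, E_{n-1}(r), \gamma'(r) \right\}$ on the target side, the map $(d\exp_p)_{r\omega}$ has the diagonal matrix representation $\mathrm{diag}\left( s_c(r)/r, \ldots, s_c(r)/r, 1 \right)$. Hence $J(r\omega) = \left( s_c(r)/r \right)^{n-1}$, and multiplying by $r^{n-1} \mathrm{d}r \, \mathrm{d}\omega$ collapses the factors of $r$ and produces exactly $s_c^{n-1}(r) \, \mathrm{d}r \, \mathrm{d}\omega$, which is Equation \eqref{PD}. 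The main obstacle, and the only substantive geometric input, is the Jacobi-field step: verifying that constant sectional curvature reduces the tensorial Jacobi equation to the scalar equation \eqref{CurvatureDE} so that $s_c$ is exactly the magnitude of the normal Jacobi fields. Everything else is the routine pullback of the Riemannian volume form and a polar change of variables on $T_pM$.
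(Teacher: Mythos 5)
Your proof is correct. The paper states this polar decomposition without proof, citing \cite{LeeRM}; your argument --- pulling back $\mathrm{d}V_g$ by $\exp_p$, passing to polar coordinates on $T_pM$, and computing the Jacobian through normal Jacobi fields, which under constant curvature $c$ satisfy the scalar equation \eqref{CurvatureDE} and hence equal $s_c(r)$ times a parallel frame --- is exactly the standard argument in that reference, so your approach coincides with the one the paper relies on.
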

			\begin{remark}
				\normalfont
				One of the advantages of the model Riemannian spaces ($\mathbb{R}^n$, $\mathbb{H}^n$, and $\mathbb{S}^n$) is that they are geodesic balls with respect to any point as the center. Therefore, on these spaces, we can polar decompose any integral keeping any (fixed) point as the ``center" of the manifold.
			\end{remark}
			In this article, we talk about spaces of constant curvature. To have a better understanding, one often visualizes these spaces through some models. The Euclidean space and the sphere, indeed, have natural models. However, the hyperbolic space can be described through different models. To keep a geometrical insight in the background, we choose the upper sheet of hyperboloid model for $\mathbb{H}^n$. For other models available in literature, we refer the reader to \cite{LeeRM}. Also, to make things a bit simpler, we consider $\mathbb{R}^n$ with an embedding inside $\mathbb{R}^{n + 1}$ as $\mathbb{R}^n \simeq \left\lbrace x \in \mathbb{R}^{n + 1} | x_{n + 1} = 1 \right\rbrace$. With this embedding, all three spaces of constant curvature can be visualized simultaneously as in Figure \ref{SimultaneousFigure}. The point $e_{n + 1} \in \mathbb{R}^{n + 1}$ can also serve as a ``common" origin to all three spaces, and we denote it by $o$.
			\begin{figure}[ht!]
				\centering
				\includegraphics[scale=0.17]{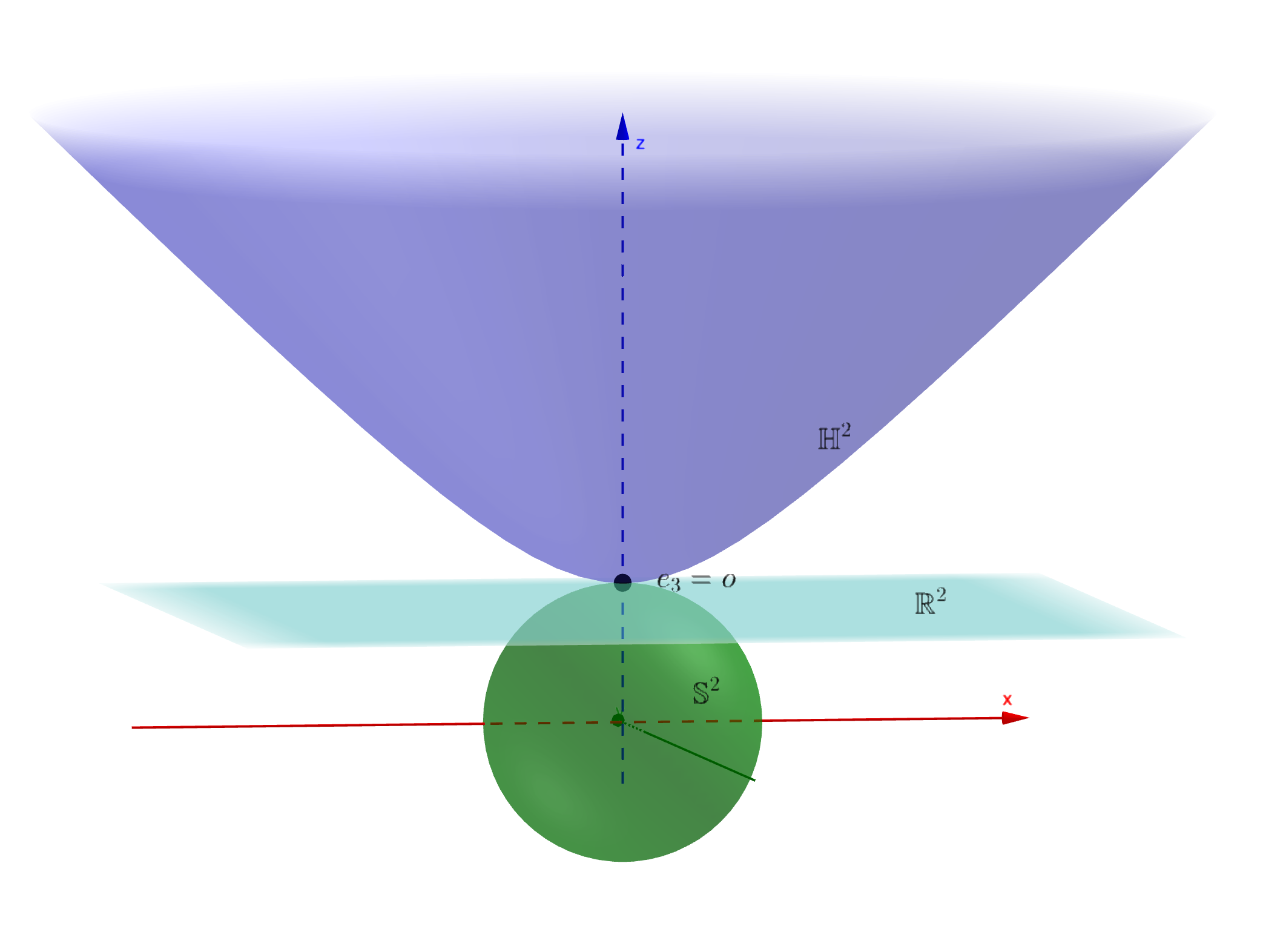}
				\caption{The three model spaces embedded in a higher dimensional Euclidean space.}
				\label{SimultaneousFigure}
			\end{figure}
			
			The following characterization of $k$-dimensional totally-geodesic submanifolds in the model Riemannian spaces is of importance to us. This can be found, though independently, in \cite{LeeRM}, \cite{HelgasonGGA}, or \cite{BerensteinRubin}.
		\begin{theorem}
			\label{CharacterizationTGS}
			Let $X$ be a simply-connected space of constant curvature $c$. The $k$-dimensional totally-geodesic submanifolds of $X$ are the intersections of $X$ with $\left( k + 1 \right)$-dimensional subspaces of $\mathbb{R}^{n + 1}$.
		\end{theorem}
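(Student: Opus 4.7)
The plan is to prove the two inclusions uniformly across the three models, using the ambient $\mathbb{R}^{n+1}$ equipped with the Euclidean inner product when $X = \mathbb{R}^n$ (affine embedding as $\{x_{n+1}=1\}$) or $X = \mathbb{S}^n$, and with the Lorentz form of signature $(n,1)$ when $X = \mathbb{H}^n$ (upper sheet of the hyperboloid). The non-trivial direction is driven by a reflection argument, and the converse by a tangent-space/dimension check combined with local uniqueness of totally-geodesic submanifolds.

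For the $(\supseteq)$ direction, let $V \subseteq \mathbb{R}^{n+1}$ be a $(k+1)$-dimensional subspace such that $S := V \cap X$ is a smooth $k$-submanifold. In the Euclidean case, $S$ is an affine $k$-plane whose geodesics are straight lines, which are also geodesics of $\mathbb{R}^n$, so the totally-geodesic property is immediate. In the spherical and hyperbolic cases, I would introduce the linear involution $\sigma$ of $\mathbb{R}^{n+1}$ that is the identity on $V$ and $-\mathrm{Id}$ on $V^\perp$, with orthogonality taken with respect to the appropriate bilinear form. Then $\sigma$ preserves the form, hence the level set $\{\langle x,x\rangle = \pm 1\}$ containing $X$, and by fixing $V$ pointwise also preserves the relevant sheet; thus $\sigma$ restricts to an isometry of $X$ whose fixed point set equals $S$. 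For any $p \in S$ and $v \in T_pS$, the $X$-geodesic $\gamma$ with $\gamma(0)=p$, $\gamma'(0)=v$ satisfies $\sigma\circ\gamma = \gamma$ by uniqueness of geodesics (since $\sigma$ is an isometry fixing both $p$ and $v$), and hence $\gamma$ stays in $\mathrm{Fix}(\sigma)\cap X = S$. This is exactly the totally-geodesic condition.

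For the $(\subseteq)$ direction, let $S \subseteq X$ be a connected $k$-dimensional totally-geodesic submanifold, fix $p \in S$, and set $W := T_pS \subseteq T_pX$. Viewing $W$ as a subspace of $\mathbb{R}^{n+1}$ through the standard identifications, I would define $V := \mathrm{span}(p, W)\subseteq \mathbb{R}^{n+1}$. A quick check gives $\dim V = k+1$: in the spherical and hyperbolic cases, $T_pX = p^\perp$ while $\langle p,p\rangle \ne 0$, so $p \notin W$; in the Euclidean case, $W \subseteq \{x_{n+1}=0\}$ while $p_{n+1}=1$. By the first direction, $V\cap X$ is itself a totally-geodesic $k$-submanifold whose tangent space at $p$ equals $W$. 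Since a totally-geodesic submanifold coincides with $\exp_p^X(W)$ on a normal neighborhood of $p$, $S$ and $V\cap X$ agree near $p$; a standard connectedness propagation (repeating the same argument at each point of $S$, using that $X$ is geodesically complete) then upgrades local equality to global equality.

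The main obstacle I anticipate is the clean handling of local uniqueness in the converse: the fact that for a totally-geodesic $S$ one has $\exp_p^S = \exp_p^X\!\mid_{T_pS}$, so that $S$ is locally determined by the pair $(p, T_pS)$. This is the only step that is not purely linear algebra, and it is what licenses the comparison between $S$ and $V\cap X$. A secondary bookkeeping issue is verifying, in the hyperbolic case, that $V$ has signature $(k,1)$ (so $V\cap \mathbb{H}^n$ is a nondegenerate $k$-submanifold diffeomorphic to $\mathbb{H}^k$) and that the reflection $\sigma$ preserves the upper sheet; the former follows from $W \subseteq p^\perp$ being spacelike and $p$ being timelike, and the latter from $\sigma(p)=p$ together with connectedness of each sheet.
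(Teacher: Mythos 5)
The paper does not actually prove this statement: it quotes it as a known classification and points to \cite{LeeRM}, \cite{HelgasonGGA}, and \cite{BerensteinRubin}, so there is no in-paper argument to compare against. Your proposal supplies the standard textbook proof, and it is essentially sound: the reflection $\sigma = \mathrm{Id}_V \oplus (-\mathrm{Id}_{V^\perp})$ (Euclidean form on $\mathbb{S}^n$, Lorentz form on $\mathbb{H}^n$) is an isometry of the ambient quadric whose fixed-point set in $X$ is exactly $V \cap X$, and the uniqueness-of-geodesics argument shows this fixed-point set is totally geodesic; conversely $V = \mathrm{span}(p, T_pS)$ has dimension $k+1$ by the transversality checks you give, and the comparison with $V \cap X$ rests on the local rigidity fact $\exp_p^S = \exp_p^X\!\mid_{T_pS}$, which is exactly the content of $S$ having vanishing second fundamental form and is available in \cite{LeeRM}. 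Your handling of the hyperbolic bookkeeping (nondegeneracy of $V$ with signature $(k,1)$ when $V$ meets the upper sheet, and preservation of the sheet because $\sigma$ fixes a point of it) is correct.

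Two small points deserve explicit care. First, in the forward direction you should note that $V^\perp$ is taken with respect to the same bilinear form and that the direct decomposition $\mathbb{R}^{n+1} = V \oplus V^\perp$ in the Lorentz case uses precisely the nondegeneracy of $V$ that you verify; this is why the argument does not apply to degenerate $V$, which in any case meet $\mathbb{H}^n$ in nothing or in lower-dimensional sets. Second, your connectedness propagation only yields that $S$ is an \emph{open} submanifold of $V \cap X$; to conclude the stated equality one must read the theorem (as the paper implicitly does, cf.\ Theorem \ref{PerpendicularGeodesics} where closedness is assumed) as classifying complete, or equivalently closed/maximal, totally geodesic submanifolds. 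With that reading, completeness of $S$ plus openness and closedness of $S$ in the connected manifold $V \cap X$ gives equality, and your argument is complete.
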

		In simple terms we say that the totally-geodesic submanifolds of the model Riemannian spaces are lower dimensional versions of themselves. An important consequence of Theorem \ref{CharacterizationTGS} is that the totally-geodesic submanifolds can be viewed as geodesic balls with any point (in them) as the center. Hence, we can polar decompose integrals on totally-geodesic submanifolds by fixing a point.
		
		We now move on to give the precise definition of the totally-geodesic $k$-plane transform.
		\begin{definition}[$k$-plane transform]
			\label{KPlaneTransformX}
			Let $X$ be a simply connected Riemannian manifold of constant curvature $c$, and let $\Xi_k \left( X \right)$ be the collection of all $k$-dimensional totally-geodesic submanifolds in $X$. Given a ``nice" function $f: X \rightarrow \mathbb{C}$, its $k$-plane transform is a function $R_kf: \Xi_k \left( X \right) \rightarrow \mathbb{C}$ defined as
			\begin{equation}
				\label{KPlaneTransformEquation}
				R_kf \left( \xi \right) = \int\limits_{\xi} f \left( x \right) \mathrm{d}_{\xi}x,
			\end{equation}
			where, $d_{\xi}x$ is the measure induced from the (induced) Riemannian metric of $\xi$.
		\end{definition}
		We use the following result in Section \ref{MainSection} to get a unified formula for the totally-geodesic $k$-plane transform of radial functions.
		\begin{theorem}[\cite{LeeRM}]
			\label{PerpendicularGeodesics}
			Let $\left( M, g \right)$ be a Riemannian manifold, $p \in M$, and $S \subseteq M$ be a (closed) totally-geodesic submanifold such that $p \notin S$. Then, there is a point $q \in S$ such that $d \left( p, S \right) = d \left( p, q \right)$. Moreover, the geodesic segment joining $p$ and $q$ intersects $S$ orthogonally.
		\end{theorem}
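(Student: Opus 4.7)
The proof naturally splits into two assertions: (i) existence of a closest point $q \in S$, and (ii) orthogonality at $q$ of the minimizing geodesic from $p$ to $q$. My plan is to handle these separately, using only standard tools from Riemannian geometry together with the completeness of the model spaces, which is implicit in Theorem \ref{Classification}.

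For existence, I would set $\delta := d(p, S) = \inf_{y \in S} d(p, y)$, which is positive because $p \notin S$ and $S$ is closed. Pick a minimizing sequence $(q_n) \subseteq S$ with $d(p, q_n) \to \delta$; eventually the $q_n$ lie in the closed metric ball $\overline{B}(p, \delta + 1)$. By the Hopf--Rinow theorem this ball is compact in any complete Riemannian manifold, in particular in each of our three model spaces, so after passing to a subsequence $q_n \to q$. Closedness of $S$ forces $q \in S$, and continuity of the distance function gives $d(p, q) = \delta$.

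For orthogonality, completeness again furnishes a minimizing unit-speed geodesic $\gamma : [0, \delta] \to M$ with $\gamma(0) = p$ and $\gamma(\delta) = q$. Fix $v \in T_q S$ arbitrarily and choose a smooth curve $\alpha : (-\varepsilon, \varepsilon) \to S$ with $\alpha(0) = q$ and $\alpha'(0) = v$. I would then build a smooth variation $\Gamma(s, t)$ of $\gamma$ with $\Gamma(s, 0) = p$ and $\Gamma(s, \delta) = \alpha(s)$; one concrete recipe, valid for $|s|$ sufficiently small because $q$ lies in a normal neighbourhood of $p$ along $\gamma$, is to take the unique short geodesic from $p$ to $\alpha(s)$ given by $\exp_p$. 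Let $L(s)$ be the length of $\Gamma(s, \cdot)$. Then
\begin{equation*}
L(s) \geq d(p, \alpha(s)) \geq \delta = L(0),
\end{equation*}
so $L$ has a local minimum at $s = 0$. The first-variation-of-arclength formula, combined with the fact that $\gamma$ is a geodesic (so the interior integral vanishes) and $\Gamma(s, 0) \equiv p$ (so the boundary term at $t = 0$ vanishes), now yields
\begin{equation*}
0 = L'(0) = \langle \gamma'(\delta), \alpha'(0) \rangle = \langle \gamma'(\delta), v \rangle.
\end{equation*}
Since $v \in T_q S$ was arbitrary, $\gamma'(\delta) \perp T_q S$, which is exactly the stated orthogonality.

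The step I anticipate as the main obstacle is the deduction ``$L$ attains its minimum at $s = 0$, hence $L'(0) = 0$'': a priori $s \mapsto d(p, \alpha(s))$ is only Lipschitz, so one cannot differentiate it directly. Sandwiching it between the smooth quantity $L(s)$ and the constant $\delta$ is what rescues the argument, because $L$ is itself a smooth function of $s$ and inherits the minimum at $s = 0$. A secondary issue is that the existence half tacitly uses geodesic completeness; this is automatic on $\mathbb{R}^n$, $\mathbb{H}^n$ and $\mathbb{S}^n$ and should simply be acknowledged. Incidentally, ``totally-geodesic'' plays no role in the argument: the same proof applies to any closed embedded submanifold of a complete Riemannian manifold.
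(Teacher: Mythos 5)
The paper does not prove this statement at all: it is quoted from \cite{LeeRM} as background, so the only meaningful comparison is with the standard textbook argument, and that is essentially what you give --- Hopf--Rinow compactness of closed bounded sets for the existence of the foot point $q$, then the first variation of arc length with the initial point fixed to get $\langle \gamma'(\delta), v\rangle = 0$ for all $v \in T_qS$. Your two side remarks are also on target: the statement as printed omits completeness, which is genuinely needed for a general $(M,g)$ but is automatic for the model spaces the paper uses; and the sandwich $L(s) \geq d(p,\alpha(s)) \geq \delta = L(0)$ is exactly the device that replaces differentiating the merely Lipschitz function $s \mapsto d(p,\alpha(s))$. You are also right that total geodesicity of $S$ is not needed for this particular lemma.

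The one step that does not hold up as written is your concrete recipe for the variation: taking ``the unique short geodesic from $p$ to $\alpha(s)$,'' justified by the claim that $q$ lies in a normal neighbourhood of $p$ along $\gamma$. That claim is unjustified in general: the nearest point $q$ of a closed (even totally geodesic) submanifold can lie in the cut locus of $p$. For instance, on a flat cylinder with $S$ the vertical geodesic line diametrically opposite $p$, the foot point $q$ is reached by two distinct minimizing geodesics, $\exp_p$ is not a diffeomorphism near $\delta\gamma'(0)$, and ``the unique short geodesic to $\alpha(s)$'' is neither unique nor smoothly varying. The repair is routine and bypasses $p$ entirely: build the variation along $\gamma$, e.g. $\Gamma(s,t) = \exp_{\gamma(t)}\bigl(\tfrac{t}{\delta} W_s(t)\bigr)$, where $W_s(t)$ is the parallel transport along $\gamma$ of $\exp_q^{-1}(\alpha(s))$ (defined for $|s|$ small since $\exp_q$ is a local diffeomorphism at $0$); then $\Gamma(s,0) = p$, $\Gamma(s,\delta) = \alpha(s)$, and the variation field is $V(t) = \tfrac{t}{\delta}P_t v$ with $V(0) = 0$, $V(\delta) = v$. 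Alternatively, since $S$ \emph{is} totally geodesic here, you may simply take $\alpha(s) = \exp_q(sv) \in S$. With either choice your first-variation computation and the rest of the argument go through verbatim.
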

		Lastly, we state a unified Pythagoras' theorem in the three model spaces. We see that the Pythagoras' theorem plays an important role in deriving a formula for the $k$-plane transform of radial functions. With this link unified, we expect that the formula for the $k$-plane transform, and the results concerning the end-point estimates can also be unified. While there are various ways to look at Pythagoras' theorem, the one we state can be found in \cite{UnifiedPTFoote}. The author of \cite{UnifiedPTFoote} emphasises the importance of looking at the result in terms of area, rather than simply the length of the sides (as in \cite{Ratcliffe}).
		\begin{theorem}[\cite{UnifiedPTFoote}]
			Let $X$ be a space of constant curvature $c$ and let $p, q, r \in X$. Suppose that the geodesic triangle formed by these points is a right-angled triangle with the right-angle at $q$. Suppose the length of the side $pq$ is $a$, that of $qr$ is $b$ and that of $pr$ be $h$. Then, we have,
			\begin{equation}
				\label{UnifiedPT}
				A \left( h \right) = A \left( a \right) + A \left( b \right) - \frac{c}{2 \pi} A \left( a \right) A \left( b \right),
			\end{equation}
			where, $A \left( r \right)$ is the area of a disc in $X$ with radius $r$.
		\end{theorem}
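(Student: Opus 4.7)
The plan is to translate the area identity \eqref{UnifiedPT} into an equivalent ``cosine-form Pythagoras'' identity $s_c'(h) = s_c'(a)\, s_c'(b)$, and then verify the latter uniformly on the three model spaces using the ambient embedding. Because the triangle $pqr$ is contained in the $2$-dimensional totally-geodesic submanifold determined by its vertices (Theorem \ref{CharacterizationTGS}), I may work with $n = 2$ throughout.

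First I would compute $A(r)$ in closed form. Polar decomposition \eqref{PD} centered at the disc's center gives
\[
A(r) = 2\pi \int_0^r s_c(\rho)\, d\rho.
\]
Using $s_c'' = -c\, s_c$ from \eqref{CurvatureDE}, this integrates to $A(r) = \pi r^2$ when $c = 0$, and to $A(r) = \frac{2\pi}{c}(1 - s_c'(r))$ when $c \neq 0$. For $c = 0$, identity \eqref{UnifiedPT} collapses at once to $h^2 = a^2 + b^2$, the classical Pythagoras theorem. For $c \neq 0$, substituting the closed form of $A$ into both sides of \eqref{UnifiedPT} and cancelling the common factor $\frac{2\pi}{c}$ reduces the area identity to the equivalent statement
\[
s_c'(h) = s_c'(a)\, s_c'(b).
\]

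To establish this identity uniformly, I would place $q$ at the common origin $o = e_{n+1}$. The hypothesis that the angle at $q$ is right means the initial velocities of the geodesics $qp$ and $qr$ are orthogonal unit tangent vectors $u$ and $v$ at $o$. In each of the three model spaces, the geodesic starting at $o$ with unit velocity $w$ has the uniform closed form $\gamma_w(t) = s_c'(t)\, o + s_c(t)\, w$ (the Euclidean case degenerating to $o + tw$ since $s_0'(t) = 1$). Writing $p = \gamma_u(a)$ and $r = \gamma_v(b)$, I would pair them with respect to the appropriate ambient bilinear form (Euclidean on $\mathbb{R}^{n+1}$ for $\mathbb{R}^n$ and $\mathbb{S}^n$, Lorentzian for $\mathbb{H}^n$). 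The cross-terms involving $\langle o, u\rangle$, $\langle o, v\rangle$, and $\langle u, v\rangle$ all vanish by orthogonality, leaving $\langle p, r\rangle_{\mathrm{amb}} = \pm s_c'(a)\, s_c'(b)$, which recovers $s_c'(h)$ since the ambient pairing computes $\cos h$ on $\mathbb{S}^n$ and $\cosh h$ on $\mathbb{H}^n$ (up to the standard sign convention).

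The main obstacle I foresee is the Euclidean degeneracy: the cosine-form identity reads $1 = 1$ when $c = 0$, so the nontrivial content of Pythagoras must be extracted either from the next order of the Taylor expansion in $c$ at $c = 0$ or, more transparently, from the direct computation $|p - r|^2 = a^2 + b^2$ in the embedded hyperplane. Once that is handled as a parallel but separate case, all remaining manipulations are routine applications of the identities \eqref{ScNonLinDE}--\eqref{DoubleAngleFormula5} together with the polar decomposition \eqref{PD}.
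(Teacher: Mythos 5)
Your argument is correct, but it is worth noting that the paper does not prove this statement at all: it is quoted from Foote's article \cite{UnifiedPTFoote} and used as a black box, the only in-paper manipulation being the substitution of $A(r) = 4\pi s_c^2(r/2)$ to obtain \eqref{UnifiedPT2}. What you supply is an independent verification, and it is essentially the paper's later algebra run in reverse: your closed form $A(r) = \frac{2\pi}{c}\left( 1 - s_c'(r) \right)$ for $c \neq 0$ (and $\pi r^2$ for $c = 0$) agrees with \eqref{AreaX} via \eqref{DoubleAngleFormula3}, and your reduction of \eqref{UnifiedPT} to $s_c'(h) = s_c'(a)\, s_c'(b)$ is exactly the content of \eqref{UnifiedPT2} rewritten through the same double-angle identity, since $(1-\alpha) + (1-\beta) - (1-\alpha)(1-\beta) = 1 - \alpha\beta$. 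The verification of the cosine-form identity through the ambient quadratic form, with $\gamma_w(t) = s_c'(t)\, o + s_c(t)\, w$ and the cross-terms killed by $\langle o, u \rangle = \langle o, v \rangle = \langle u, v \rangle = 0$, is sound, and you correctly isolate the two places where care is needed: the Euclidean degeneracy (where the identity trivializes and Pythagoras must be extracted from $|p - r|^2 = a^2 + b^2$ directly) and the Lorentzian sign convention $\cosh d(p,r) = -\langle p, r \rangle_{\mathrm{amb}}$ on the hyperboloid. Your reduction to $n = 2$ via Theorem \ref{CharacterizationTGS} is also the right way to make sense of ``area of a disc'' when $\dim X > 2$, since $A$ in \eqref{UnifiedPT} and \eqref{AreaX} is two-dimensional area; strictly speaking the ambient computation of $s_c'(h) = s_c'(a) s_c'(b)$ works in any dimension, so the reduction is only needed for that interpretive point. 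What your route buys is a self-contained proof keyed to the machinery the paper already develops (\eqref{CurvatureDE}, \eqref{ScNonLinDE}, \eqref{DoubleAngleFormula3}, \eqref{PD}); what the citation buys the paper is Foote's emphasis on the area formulation as the curvature-uniform statement, which is the form the paper wants to quote before converting it to \eqref{UnifiedPT2}.
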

		It is known (see for instance, \cite{UnifiedPTFoote}) that the area of a disc of radius $r$ in a space $X$ of constant curvature $c$ is given by
		\begin{equation}
			\label{AreaX}
			A \left( r \right) = 4 \pi s_c^2 \left( \frac{r}{2} \right).
		\end{equation}
		Using Equation \eqref{AreaX} into Equation \eqref{UnifiedPT}, and simplifying, we get the following version of the unified Pythagoras' theorem.
		\begin{equation}
			\label{UnifiedPT2}
			s_c^2 \left( \frac{h}{2} \right) = s_c^2 \left( \frac{a}{2} \right) + s_c^2 \left( \frac{b}{2} \right) - 2c s_c^2 \left( \frac{a}{2} \right) s_c^2 \left( \frac{b}{2} \right).
		\end{equation}
		We make use of Equation \eqref{UnifiedPT2} throughout our calculations. With this background, we now proceed to give the main results of this article.
	\section{End-Point Estimates on constant curvature spaces}
		\label{MainSection}
		The main result of this section depends on a unified formula for the totally-geodesic $k$-plane transform of radial functions for spaces of constant curvature. The models of these spaces are fixed as in Section \ref{PreliminariesSection}, with their ``origin" denoted by $o$. We wish to mention here that while considering the $k$-plane transform on the sphere, it is enough to consider even functions, since odd functions are in the null space of the $k$-plane transform (see \cite{KumarRay}). Consequently, it is enough to consider only the half-sphere. Keeping this in mind, we make the following definition of radial functions.
		\begin{definition}[Radial functions]
			\label{RadialFunctions}
			Let $X$ be a space of constant curvature $c$. A function $f: X \rightarrow \mathbb{C}$ is radial if there is a function $\tilde{f}: s_c \left( \left[ 0, \frac{\delta \left( X \right)}{4} \right) \right) \rightarrow \mathbb{C}$ such that $f \left( x \right) = \tilde{f} \left( s_c \left( \frac{d \left( o, x \right)}{2} \right) \right)$.
		\end{definition}
		\begin{remark}
			\normalfont
			The definition we use here seems to be different from that found in literature. Particularly, authors often use $\cos$ and $\cosh$ for radial functions on the sphere and the hyperbolic space, respectively (see, for instance, \cite{KumarRay}). We see that Definition \ref{RadialFunctions} is convenient to our needs. Also, we would like to remark that while the definition seems to be different, it is equivalent to the usual definitions.
		\end{remark}
		We are now in a position to give the unified formula for the totally-geodesic $k$-plane transform of radial functions.
		\begin{theorem}
			\label{TGRTRadialFunction}
			Let $X$ be a simply connected space of constant curvature $c$, and $f: X \rightarrow \mathbb{C}$ be a radial function. Then,
			\begin{equation}
				\label{RadialFunctionEquation}
				s_c' \left( d \left( o, \xi \right) \right) R_kf \left( \xi \right) = K \int\limits_{d \left( o, \xi \right)}^{\frac{\delta \left( X \right)}{2}} \tilde{f} \left( s_c \left( \frac{t}{2} \right) \right) \left[ 1 - \left( \dfrac{\left( \ln s_c \right)' \left( t \right)}{\left( \ln s_c \right)' \left( d \left( o, \xi \right) \right)} \right)^2 \right]^{\frac{k}{2} - 1} s_c^{k - 1} \left( t \right) \mathrm{d}t,
			\end{equation}
			where $K$ is a constant depending only on the dimension of the totally-geodesic submanifolds under consideration, and the curvature $c$. Particularly, we have
			$$K = \begin{cases}
						\left| \mathbb{S}^{k - 1} \right|, & c \neq +1. \\
						2 \left| \mathbb{S}^{k - 1} \right|, & c = +1.
					\end{cases}$$
		\end{theorem}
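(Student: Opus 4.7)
My plan is to drop a perpendicular from the origin $o$ onto $\xi$ and polar-decompose the integral over $\xi$ using the foot of this perpendicular as center. By Theorem \ref{PerpendicularGeodesics} there is a point $q \in \xi$ realizing $d(o, q) = d(o, \xi) =: p$ with the geodesic $oq$ meeting $\xi$ orthogonally at $q$, and by Theorem \ref{CharacterizationTGS} the submanifold $\xi$ is itself a model space of curvature $c$. Applying the polar decomposition \eqref{PD} with center $q$ then gives
\begin{equation*}
R_k f(\xi) = \int_{\mathbb{S}^{k-1}} \int_0^{\delta(X)} f(\exp_q(r\omega))\, s_c^{k-1}(r)\, dr\, d\omega.
\end{equation*}
For each $\omega$ the geodesic triangle $o, q, \exp_q(r\omega)$ is right-angled at $q$, so by the unified Pythagoras \eqref{UnifiedPT2} the distance $d(o, \exp_q(r\omega))$ depends only on $p$ and $r$; since $f$ is radial, the integrand is independent of $\omega$ and the angular integration contributes $|\mathbb{S}^{k-1}|$.

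Next I would pass from the half-angle form \eqref{UnifiedPT2} to a clean full-distance identity. Using the double-angle formulas \eqref{DoubleAngleFormula1} and \eqref{DoubleAngleFormula3}, a short calculation reduces Pythagoras to
\begin{equation*}
s_c^2(t) = s_c^2(p) + s_c^2(r)\, s_c'(p)^2, \qquad t := d(o, \exp_q(r\omega)),
\end{equation*}
and implicit differentiation gives $s_c(t)\, dt = s_c(r)\, s_c'(p)\, dr$. The algebraic crux is the chain of equalities
\begin{equation*}
1 - \left(\frac{(\ln s_c)'(t)}{(\ln s_c)'(p)}\right)^2 = \frac{s_c(t)^2 s_c'(p)^2 - s_c'(t)^2 s_c(p)^2}{s_c(t)^2 s_c'(p)^2} = \frac{s_c(t)^2 - s_c(p)^2}{s_c(t)^2 s_c'(p)^2} = \frac{s_c(r)^2}{s_c(t)^2},
\end{equation*}
where the cross-terms cancel thanks to $(s_c')^2 = 1 - c s_c^2$ (Equation \eqref{ScNonLinDE}) and the last step uses the simplified Pythagoras above. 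Raising to the $(k-2)/2$ power and combining with the Jacobian produces
\begin{equation*}
s_c^{k-1}(r)\, dr = \frac{s_c^{k-1}(t)}{s_c'(p)} \left[ 1 - \left(\frac{(\ln s_c)'(t)}{(\ln s_c)'(p)}\right)^2 \right]^{k/2 - 1} dt,
\end{equation*}
after which multiplying the reduced identity through by $s_c'(p)$ yields \eqref{RadialFunctionEquation} with $K = |\mathbb{S}^{k-1}|$.

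The remaining subtlety, which I expect to be the main obstacle, is the sphere case $c = +1$. There $\xi$ is a $k$-sphere and $r$ ranges over $[0, \pi]$, but $r \mapsto t$ sends $[0, \pi]$ monotonically onto $[p, \pi - p]$, while the statement integrates $t$ only over $[p, \pi/2]$. The resolution is that the integrand $r \mapsto f(\exp_q(r\omega))\, s_c^{k-1}(r)$ is symmetric about $r = \pi/2$: the weight $\sin^{k-1}(r)$ is obviously so, and the radial factor is too because $d(o, \exp_q(r\omega))$ and $d(o, \exp_q((\pi - r)\omega))$ sum to $\pi$, which leaves $f$ invariant by the evenness built into Definition \ref{RadialFunctions}. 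The $r$-integral over $[0, \pi]$ therefore equals twice its restriction to $[0, \pi/2]$, on which $r \mapsto t$ is a diffeomorphism onto $[p, \pi/2]$; this accounts for the extra factor of $2$ in $K$ when $c = +1$.
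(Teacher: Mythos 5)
Your proposal is correct and follows essentially the same route as the paper's proof: the foot of the perpendicular from Theorem \ref{PerpendicularGeodesics}, polar decomposition of $\xi$ about that foot, the unified Pythagoras identity \eqref{UnifiedPT2}, the change of variable $t = d\left( o, x \right)$, and the same even-symmetry argument producing the extra factor $2$ when $c = +1$. The only difference is cosmetic: your full-angle identity $s_c^2 \left( t \right) = s_c^2 \left( p \right) + s_c^2 \left( r \right) \left( s_c' \right)^2 \left( p \right)$ with $p = d \left( o, \xi \right)$ (equivalently $s_c' \left( t \right) = s_c' \left( p \right) s_c' \left( r \right)$, which is the identity that actually justifies your Jacobian $s_c \left( t \right) \mathrm{d}t = s_c \left( r \right) s_c' \left( p \right) \mathrm{d}r$) condenses the longer manipulation the paper carries out via Equations \eqref{DoubleAngleFormula1}--\eqref{DoubleAngleFormula4}.
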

		\begin{proof}
			Since $\xi$ is a totally-geodesic submanifold of $X$, we conclude from Theorem \ref{PerpendicularGeodesics} that there is some $x_0 \in \xi$ such that $d \left( o, \xi \right) = d  \left( o, x_0 \right)$. Moreover, the geodesic triangle formed by $o, x_0$ and any $x \in \xi$ is a right-angled triangle with the right-angle at $x_0$. We consider the polar decomposition of $\xi$ with respect to $x_0$ and use $r = d \left( x, x_0 \right)$, in this proof. We then have,
			\begin{equation}
				\label{F1}
				R_kf \left( \xi \right) = | \mathbb{S}^{k - 1} | \int\limits_{0}^{\delta \left( X \right)} \tilde{f} \left( s_c \left( \frac{d \left( o, x \right)}{2} \right) \right) s_c^{k - 1} \left( r \right) \mathrm{d}r.
			\end{equation}
			Two points are of interest here: One, since the triangle formed by $o, x$, and $x_0$ is right-angled at $x_0$ (see Figure \ref{RightTriangleFigure}), we have that $s_c \left( \frac{d \left( o, x \right)}{2} \right)$ is a function of $r$, owing to the Pythagoras' theorem. Second, if $X$ is the sphere and $f$ is an even function, then the integral can be reduced from $\left( 0, \delta \left( X \right) \right)$ to $\left( 0, \frac{\delta \left( X \right)}{2} \right)$. However, this brings in a factor of $2$, which is not present in the other two spaces. Therefore, henceforth, we will not be worried about the exact value of the constant, but only mention it by $K$. The geodesic triangles under consideration in different spaces are shown in Figure \ref{RightTriangleFigure}.
			\begin{figure}[ht!]
				\centering
				\begin{subfigure}[t]{1.0\textwidth}
					\centering
					\includegraphics[scale=0.275]{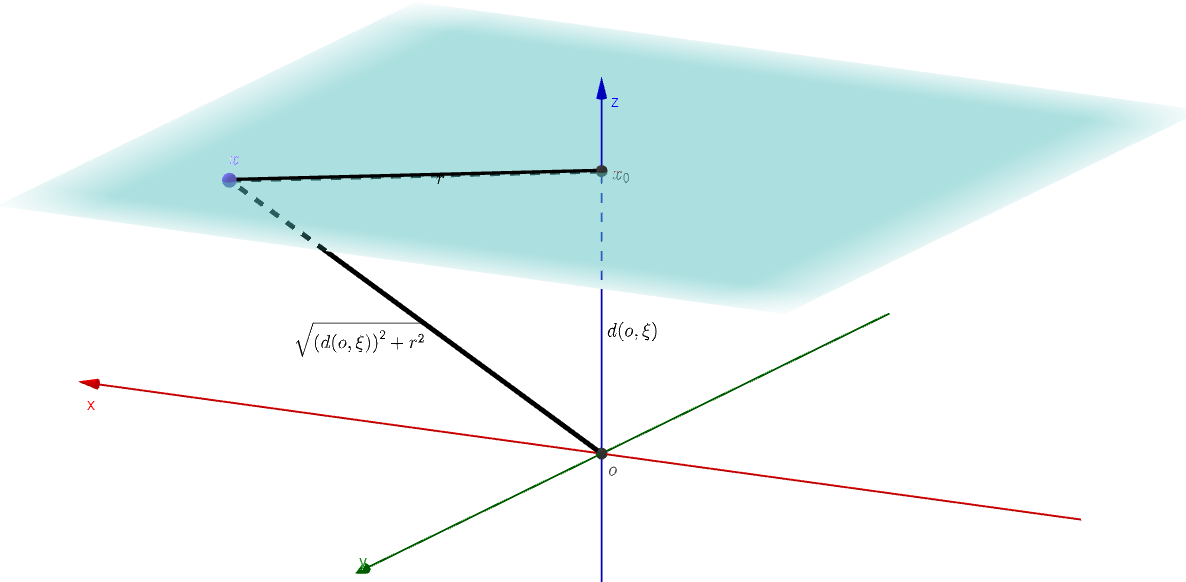}
					\caption{Distance of $x$ from $o$ in terms of $d \left( o, \xi \right)$ and $r$ in $\mathbb{R}^n$.}
					\label{RightTriangleKTGSRn}
				\end{subfigure}
				\\
				\begin{subfigure}[t]{1.0\textwidth}
					\centering
					\includegraphics[scale=0.275]{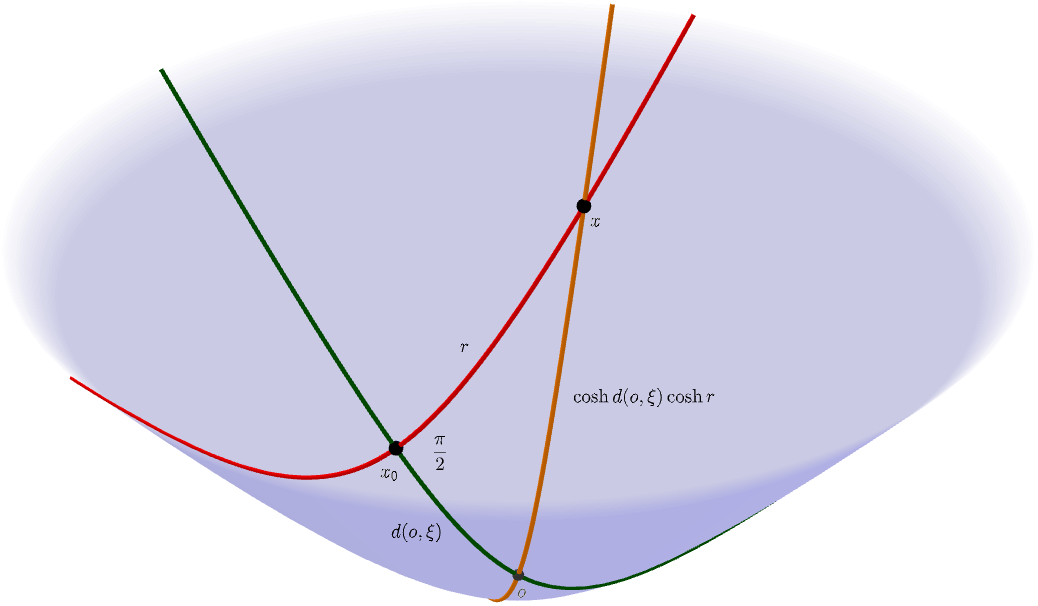}
					\caption{Distance of $x$ from $o$ in terms of $d \left( o, \xi \right)$ and $r$ in $\mathbb{H}^n$.}
					\label{RightTriangleKTGSHn}
				\end{subfigure}
				\\
				\vspace{0.5cm}
				\begin{subfigure}[t]{1.0\textwidth}
					\centering
					\includegraphics[scale=0.275]{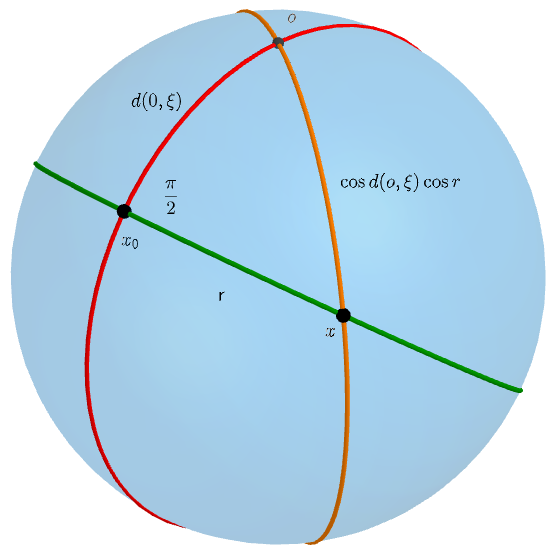}
					\caption{Distance of $x$ from $o$ in terms of $d \left( o, \xi \right)$ and $r$ in $\mathbb{S}^n$.}
					\label{RightTriangleKTGSSn}
				\end{subfigure}
				\caption{The right-angled triangles in spaces of constant curvature}
				\label{RightTriangleFigure}
			\end{figure}
			
			Using Equation \eqref{UnifiedPT2} (for the triangle formed by $0$, $x_0$ and $x$), we have,
			\begin{equation}
				\label{PTSc}
				s_c^2 \left( \frac{d \left( o, x \right)}{2} \right) = s_c^2 \left( \frac{d \left( o, \xi \right)}{2} \right) + s_c^2 \left( \frac{r}{2} \right) \left[ 1 - 2cs_c^2 \left( \frac{d \left( o, \xi \right)}{2} \right) \right].
			\end{equation}
			In Equation \eqref{F1}, let us make the change of variable $s_c^2 \left( \frac{d \left( o, x \right)}{2} \right) = s_c^2 \left( \frac{t}{2} \right)$. From Equations \eqref{PTSc}, \eqref{DoubleAngleFormula1}, and \eqref{DoubleAngleFormula3}, we have the following change of measure
			\begin{equation}
				\label{MeasureChange}
				s_c \left( r \right) \mathrm{d}r = \frac{s_c \left( t \right)}{s_c' \left( d \left( o, \xi \right) \right)} \ \mathrm{d}t.
			\end{equation}
			From our substitution and Equation \eqref{DoubleAngleFormula3}, we have
			\begin{equation}
				\label{SCR2}
				s_c^2 \left( \frac{r}{2} \right) = \frac{s_c^2 \left( \frac{t}{2} \right) - s_c^2 \left( \frac{d \left( o, \xi \right)}{2} \right)}{s_c' \left( d \left( o, \xi \right) \right)}.
			\end{equation}
			Therefore, using Equation \eqref{ScNonLinDE}, we get
			\begin{align*}
				\left( s_c' \right)^2 \left( \frac{r}{2} \right) = 1 - cs_c^2 \left( \frac{r}{2} \right) &= 1 - \frac{c s_c^2 \left( \frac{t}{2} \right) - cs_c^2 \left( \frac{d \left( o, \xi \right)}{2} \right)}{1 - 2cs_c^2 \left( \frac{d \left( o, \xi \right)}{2} \right)}
				= \frac{1 - cs_c^2 \left( \frac{t}{2} \right) - cs_c^2 \left( \frac{d \left( o, \xi \right)}{2} \right)}{1 - 2cs_c^2 \left( \frac{d \left( o, \xi \right)}{2} \right)}.
			\end{align*}
			Again, by using Equations \eqref{ScNonLinDE} and \eqref{DoubleAngleFormula3}, we get
			$$\left( s_c' \right)^2 \left( \frac{r}{2} \right) = \frac{ \left( s_c' \right)^2 \left( \frac{d \left( o, \xi \right)}{2} \right) + \left( s_c' \right)^2 \left( \frac{t}{2} \right) - 1}{s_c' \left( d \left( o, \xi \right) \right)}.$$
			Therefore, from Equation \eqref{DoubleAngleFormula1}, we get
			\begin{align}
				s_c^2 \left( r \right) &= 4s_c^2 \left( \frac{r}{2} \right) \left( s_c' \right)^2 \left( \frac{r}{2} \right) \nonumber \\
				&= 4 \left( \frac{s_c^2 \left( \frac{t}{2} \right) - s_c^2 \left( \frac{d \left( o, \xi \right)}{2} \right)}{s_c' \left( d \left( o, \xi \right) \right)} \right) \left( \frac{ \left( s_c' \right)^2 \left( \frac{d \left( o, \xi \right)}{2} \right) + \left( s_c' \right)^2 \left( \frac{t}{2} \right) - 1}{s_c' \left( d \left( o, \xi \right) \right)} \right) \nonumber \\
				&= \frac{4 s_c^2 \left( \frac{d \left( o, \xi \right)}{2} \right) \left( s_c' \right)^2 \left( \frac{d \left( o, \xi \right)}{2} \right)}{\left( s_c' \right)^2 \left( d \left( o, \xi \right) \right)} \left( \frac{s_c^2 \left( \frac{t}{2} \right)}{s_c^2 \left( \frac{d \left( o, \xi \right)}{2} \right)} - 1 \right) \left( 1 + \frac{\left( s_c' \right)^2 \left( \frac{t}{2} \right) - 1}{\left( s_c' \right)^2 \left( \frac{d \left( o, \xi \right)}{2} \right)} \right) \nonumber \\
				\label{Sc2r}
				&= \left( \frac{s_c \left( d \left( o, \xi \right) \right)}{s_c' \left( d \left( o, \xi \right) \right)} \right)^2 \left[ \frac{s_c^2 \left( \frac{t}{2} \right)}{s_c^2 \left( \frac{d \left( o, \xi \right)}{2} \right)} + \frac{s_c^2 \left( \frac{t}{2} \right) \left( \left( s_c' \right)^2 \left( \frac{t}{2} \right) - 1 \right)}{s_c^2 \left( \frac{d \left( o, \xi \right)}{2} \right) \left( s_c' \right)^2 \left( \frac{d \left( o, \xi \right)}{2} \right)} - 1 - \frac{\left( s_c' \right)^2 \left( \frac{t}{2} \right) - 1}{\left( s_c' \right)^2 \left( \frac{d \left( o, \xi \right)}{2} \right)} \right]. \nonumber
			\end{align}
			Now, we have,
			\begin{align*}
				&\frac{s_c^2 \left( \frac{t}{2} \right)}{s_c^2 \left( \frac{d \left( o, \xi \right)}{2} \right)} + \frac{s_c^2 \left( \frac{t}{2} \right) \left( \left( s_c' \right)^2 \left( \frac{t}{2} \right) - 1 \right)}{s_c^2 \left( \frac{d \left( o, \xi \right)}{2} \right) \left( s_c' \right)^2 \left( \frac{d \left( o, \xi \right)}{2} \right)} - 1 - \frac{\left( s_c' \right)^2 \left( \frac{t}{2} \right) - 1}{\left( s_c' \right)^2 \left( \frac{d \left( o, \xi \right)}{2} \right)} \\
				&= \frac{1}{s_c^2 \left( \frac{d \left( o, \xi \right)}{2} \right) \left( s_c' \right)^2 \left( \frac{d \left( o, \xi \right)}{2} \right)} \left[ s_c^2 \left( \frac{t}{2} \right) \left( s_c' \right)^2 \left( \frac{d \left( o, \xi \right)}{2} \right) + s_c^2 \left( \frac{t}{2} \right) \left( s_c' \right)^2 \left( \frac{t}{2} \right) - s_c^2 \left( \frac{t}{2} \right) \right. \\
				&\left. - \left( s_c' \right)^2 \left( \frac{d \left( o, \xi \right)}{2} \right) s_c^2 \left( \frac{d \left( o, \xi \right)}{2} \right) - \left( s_c' \right)^2 \left( \frac{t}{2} \right) s_c^2 \left( \frac{d \left( o, \xi \right)}{2} \right) + s_c^2 \left( \frac{d \left( o, \xi \right)}{2} \right) \right].
			\end{align*}
			Using Equations \eqref{DoubleAngleFormula1}, \eqref{DoubleAngleFormula2}, and \eqref{DoubleAngleFormula4}, we get
			\begin{align}
				&\frac{s_c^2 \left( \frac{t}{2} \right)}{s_c^2 \left( \frac{d \left( o, \xi \right)}{2} \right)} + \frac{s_c^2 \left( \frac{t}{2} \right) \left( \left( s_c' \right)^2 \left( \frac{t}{2} \right) - 1 \right)}{s_c^2 \left( \frac{d \left( o, \xi \right)}{2} \right) \left( s_c' \right)^2 \left( \frac{d \left( o, \xi \right)}{2} \right)} - 1 - \frac{\left( s_c' \right)^2 \left( \frac{t}{2} \right) - 1}{\left( s_c' \right)^2 \left( \frac{d \left( o, \xi \right)}{2} \right)} \nonumber \\
				&= \frac{1}{\frac{1}{4} s_c^2 \left( d \left( o, \xi \right) \right)} \left[ s_c \left( \frac{t + d \left( o, \xi \right)}{2} \right) s_c \left( \frac{t - d \left( o, \xi \right)}{2} \right) \right. \nonumber \\
				&\left. + s_c \left( \frac{t + d \left( o, \xi \right)}{2} \right) s_c' \left( \frac{t - d \left( o, \xi \right)}{2} \right) s_c \left( \frac{t - d \left( o, \xi \right)}{2} \right) s_c' \left( \frac{t + d \left( o, \xi \right)}{2} \right) \right. \\
				&\left. - s_c \left( \frac{t + d \left( o, \xi \right)}{2} \right) s_c \left( \frac{t - d \left( o, \xi \right)}{2} \right) \right] \nonumber \\
				&= \frac{s_c \left( t + d \left( o, \xi \right) \right) s_c \left( t - d \left( o, \xi \right) \right)}{s_c^2 \left( d \left( o, \xi \right) \right)} \nonumber \\
				&= \frac{s_c^2 \left( t \right) \left( s_c' \right)^2 \left( d \left( o, \xi \right) \right) - \left( s_c' \right)^2 \left( t \right) s_c^2 \left( d \left( o, \xi \right) \right)}{s_c^2 \left( d \left( o, \xi \right) \right)} \nonumber \\
				\label{BracketedTermSimplified}
				&= \left( \frac{s_c' \left( d \left( o, \xi \right) \right)}{s_c \left( d \left( o, \xi \right) \right)} \right)^2 s_c^2 \left( t \right) \left[ 1 - \frac{s_c^2 \left( d \left( o, \xi \right) \right) \left( s_c' \right)^2 \left( t \right)}{\left( s_c' \right)^2 \left( d \left( o, \xi \right) \right) s_c^2 \left( t \right)} \right].
			\end{align}
			Substituting Equation \eqref{BracketedTermSimplified} in Equation \eqref{Sc2r}, we get
			\begin{equation}
				\label{SCR}
				s_c \left( r \right) = s_c \left( t \right) \left[ 1 - \frac{s_c^2 \left( d \left( o, \xi \right) \right) \left( s_c' \right)^2 \left( t \right)}{\left( s_c' \right)^2 \left( d \left( o, \xi \right) \right) s_c^2 \left( t \right)} \right]^{\frac{1}{2}}.
			\end{equation}
			Using Equations \eqref{MeasureChange} and \eqref{SCR} in Equation \eqref{F1}, we get
			\begin{align*}
				R_kf \left( \xi \right) &= \frac{K}{s_c' \left( d \left( o, \xi \right) \right)} \int\limits_{d \left( o, \xi \right)}^{\frac{\delta \left( X \right)}{2}} \tilde{f} \left( s_c \left( \frac{t}{2} \right) \right) \left[ 1 - \frac{s_c^2 \left( d \left( o, \xi \right) \right) \left( s_c' \right)^2 \left( t \right)}{\left( s_c' \right)^2 \left( d \left( o, \xi \right) \right) s_c^2 \left( t \right)} \right]^{\frac{k}{2} - 1} s_c^{k - 1} \left( t \right) \mathrm{d}t.
			\end{align*}
			This is same as Equation \eqref{RadialFunctionEquation}.
		\end{proof}
		Now that we have a unified formula for the totally-geodesic $k$-plane transform for radial functions, we can give a proof of the end-point estimates for all three spaces together. For this, we will employ the following lemma. The result of this lemma is motivated from \cite{KumarRay}.
		\begin{lemma}
			\label{MainLemmaUnified}
			Let $\eta_1, \eta_2 > 0$ and $f$ be a characteristic function of a measurable subset of $\left( 0, \infty \right)$ with finite measure. For a given quantity $A$, let $A_+ := \max \left\lbrace A, 0 \right\rbrace$. Also, let $H$ be the heavyside function defined as $H \left( s \right) = \begin{cases}
				1, & s \geq 0. \\
				0, & s < 0.
			\end{cases}$. Then, for $c \in \left\lbrace -1, 0, 1 \right\rbrace$ and for every $p \geq 1$, we have,
			\begin{equation}
				\label{MainLemmaEquationUnified}
				\begin{aligned}
					&\int\limits_{0}^{\infty} f \left( t \right) t^{\frac{\eta_1 - H \left( c \right) - 1}{2}} \left( 1 - ct \right)_+^{\frac{\eta_2 - H \left( c \right) - 1}{2}} \mathrm{d}t \leq K \left( \int\limits_{0}^{\infty} f \left( t \right) t^{\frac{p \eta_1 - H \left( c \right) - 1}{2}} \left( 1 - ct \right)_+^{\frac{p \eta_2 - H \left( c \right) - 1}{2}} \mathrm{d}t \right)^{\frac{1}{p}},
				\end{aligned}
			\end{equation}
			where, the constant $K$ depends only on $p, \eta_1, \eta_2$ and $c$.
		\end{lemma}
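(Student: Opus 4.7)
The plan is to pass to a canonical form by substitution and then reduce to a one-parameter inequality via rearrangement.

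Write $\phi(t) = t^{(\eta_1 - H(c) - 1)/2}(1-ct)_+^{(\eta_2 - H(c) - 1)/2}$ and $\psi(t) = t^{(p\eta_1 - H(c) - 1)/2}(1-ct)_+^{(p\eta_2 - H(c) - 1)/2}$ for the two integrands, and set $g(t) := t^{\eta_1/2}(1-ct)_+^{\eta_2/2}$. A short exponent calculation yields the key identity $\psi = g^{p-1}\phi$, so with $E$ denoting the set for which $f = \chi_E$, the claimed inequality is equivalent to
\[
    \int_E \phi\, dt \leq K \left(\int_E g^{p-1}\phi\, dt\right)^{1/p}.
\]

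Next I would substitute $u = g(t)$. For $c \leq 0$, $g$ is a smooth increasing bijection of its natural domain onto $(0, \infty)$; for $c = +1$, $g$ has a single maximum at $t_\ast = \eta_1/(\eta_1+\eta_2)$, and the argument is performed on the two monotone pieces $(0, t_\ast)$ and $(t_\ast, 1)$ separately. Setting $\Lambda(u) := \phi(t)/g'(t)$ and using $\psi/\phi = g^{p-1} = u^{p-1}$, the inequality becomes $\int_{E'} \Lambda\, du \leq K \left(\int_{E'} u^{p-1}\Lambda\, du\right)^{1/p}$, where $E'$ is the image of $E$ (or of each monotone piece). Setting $d\nu = \Lambda\, du$, the Hardy--Littlewood rearrangement (applied with the non-decreasing weight $u \mapsto u^{p-1}$) shows that the infimum of $\int_{E'} u^{p-1}\, d\nu$ over $E'$ with fixed $\nu(E')$ is attained by a sublevel set of $u$, namely $E' = (0, v)$. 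Thus the lemma reduces to
\[
    N(v)^p \leq K^p\, M(v) \qquad \text{for all } v > 0,
\]
with $N(v) = \int_0^v \Lambda\, du$ and $M(v) = \int_0^v u^{p-1}\Lambda\, du$.

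To establish this, I would analyse $R(v) := N(v)^p/M(v)$. In the Euclidean case $c = 0$, $\Lambda \equiv 2/\eta_1$ is constant and $R \equiv p(2/\eta_1)^{p-1}$, which both fixes the optimal constant and serves as a sanity check. For $c = \pm 1$, $\Lambda$ is an explicit algebraic function of $u$ (read off from $g'$); the endpoint asymptotics ($R \to 0$ as $v \to 0^+$, and $R$ tending to a finite value at the right endpoint of the domain) come from the leading-order behaviour of $\Lambda$, while the interior behaviour is controlled by the identity
\[
    \frac{R'(v)}{R(v)} = \frac{\Lambda(v)\left[p M(v) - v^{p-1} N(v)\right]}{N(v)\, M(v)}.
\]
The main obstacle is that $R$ need not be globally monotone in $v$: this happens when $\eta_2 > \eta_1$ in the case $c = -1$, and generically in the case $c = +1$. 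In these regimes $R$ admits an interior maximum that must be bounded by combining the endpoint asymptotics with the sign of $p M(v) - v^{p-1} N(v)$ produced by the explicit form of $\Lambda$; for $c = +1$ the contributions from the two preimages of $g$ are then patched using $B_1^{1/p} + B_2^{1/p} \leq 2^{1 - 1/p}(B_1+B_2)^{1/p}$, valid for $p \geq 1$. The resulting $K$ depends only on $p, \eta_1, \eta_2$ and $c$, as required.
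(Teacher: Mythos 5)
Your strategy is sound and genuinely different from the paper's. The paper proves the lemma by splitting $\left(0,\infty\right)$ into $\left(0,\tfrac12\right)$, $\left(\tfrac12,1\right)$ and $\left(1,\infty\right)$, bounding the harmless factor ($t^{\cdot}$ or $\left(1-ct\right)_+^{\cdot}$) above and below by constants on each piece, and then invoking the one-weight inequality \eqref{I3} quoted from \cite{KumarRay} (together with its reflected version \eqref{I4}) before reassembling. Your substitution $u=g(t)$ with $g(t)=t^{\eta_1/2}\left(1-ct\right)_+^{\eta_2/2}$, the identity $\psi=g^{p-1}\phi$, and the bathtub/rearrangement reduction to anchored intervals $E'=(0,v)$ bypass that black box entirely: your $c=0$ computation $\Lambda\equiv 2/\eta_1$ in fact reproves the cited inequality with its optimal constant, and the patching of the two monotone branches for $c=+1$ via $B_1^{1/p}+B_2^{1/p}\le 2^{1-1/p}\left(B_1+B_2\right)^{1/p}$ is correct. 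Two points need repair, neither fatal. First, the asserted limit $R(v)\to 0$ as $v\to 0^+$ is false on the decreasing branch for $c=+1$, where a short computation gives $R(v)\to p\left(2/\eta_2\right)^{p-1}$ (and it is of course false for $c=0$); what your leading-order analysis of $\Lambda$ actually yields, and all that is needed, is that $R$ has finite limits at both endpoints of each parameter interval. Second, your closing discussion of interior maxima, the sign of $pM(v)-v^{p-1}N(v)$, and the logarithmic-derivative identity is only a plan, not a proof --- but it is also unnecessary: since $R=N^p/M$ is continuous on the open interval and has finite limits at both endpoints (including $v\to\infty$ when $c=-1$), it is bounded by compactness, and the resulting constant depends only on $p,\eta_1,\eta_2,c$, which is exactly what the lemma requires. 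If you delete the superfluous monotonicity analysis and verify the endpoint asymptotics explicitly on each branch, your argument is complete, self-contained, and arguably sharper in the constants than the paper's.
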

		\begin{proof}
			The proof of this result depends on the following inequality of \cite{KumarRay}. For $\gamma > 0$, $p \geq 1$, and characteristic functions of measurable subsets of $\left( 0, \infty \right)$ with finite measure, we have,
			\begin{equation}
				\label{I3}
				\int\limits_{0}^{\infty} f \left( s \right) s^{\gamma - 1} \mathrm{d}s \leq K \left( \int\limits_{0}^{\infty} f \left( s \right) s^{p \gamma - 1} \mathrm{d}s \right)^{\frac{1}{p}}.
			\end{equation}
			Here, $K$ is a constant that only depends on $p$ and $\gamma$. Equation \eqref{I3} is a special case of Equation \eqref{MainLemmaEquationUnified} for $c = 0$. We, therefore, prove the lemma for $c \neq 0$. It is easy to see by a simply change of variables that for characteristic functions of measurable subsets of $\left( 0, \infty \right)$, we also have, for $c \neq 0$,
			\begin{equation}
				\label{I4}
				\int\limits_{0}^{\infty} f \left( t \right) \left( 1 - ct \right)_+^{\gamma - 1} \mathrm{d}t \leq K \left( \int\limits_{0}^{\infty} f \left( t \right) \left( 1 - ct \right)_+^{p \gamma - 1} \mathrm{d}t \right)^{\frac{1}{p}}.
			\end{equation}
			Now, for $c \neq 0$, we consider the following.
			\begin{align*}
				\int\limits_{0}^{\infty} f \left( t \right) t^{\frac{\eta_1 - H \left( c \right) - 1}{2}} \left( 1 - ct \right)^{\frac{\eta_2 - H \left( c \right) - 1}{2}}_+ \mathrm{d}t &= \int\limits_{0}^{\frac{1}{2}} f \left( t \right) t^{\frac{\eta_1 - H \left( c \right) - 1}{2}} \left( 1 - ct \right)^{\frac{\eta_2 - H \left( c \right) - 1}{2}}_+ \mathrm{d}t \\
				&+ \int\limits_{\frac{1}{2}}^{1} f \left( t \right) t^{\frac{\eta_1 - H \left( c \right) - 1}{2}} \left( 1 - ct \right)^{\frac{\eta_2 - H \left( c \right) - 1}{2}}_+ \mathrm{d}t \\
				&+ \int\limits_{1}^{\infty} f \left( t \right) t^{\frac{\eta_1 - H \left( c \right) - 1}{2}} \left( 1 - ct \right)^{\frac{\eta_2 - H \left( c \right) - 1}{2}}_+ \mathrm{d}t.
			\end{align*}
			For the first integral, we observe that when $0 \leq t \leq \frac{1}{2}$, we have
			\begin{equation}
				\label{EstimateI1}
				A_1 \leq \left( 1 - ct \right)_+^{\frac{\eta_2 - H \left( c \right) - 1}{2}} \le A_2,
			\end{equation}
			for positive constant $A_1, A_2$ that depend only on $\eta_2$ and $c$. Similarly, when $\frac{1}{2} \leq t \leq 1$, we have
			\begin{equation}
				\label{EstimateI2}
				A_1' \leq t^{\frac{\eta_1 - H \left( c \right) - 1}{2}} \leq A_2',
			\end{equation}
			for positive constants $A_1'$ and $A_2'$ that depends only on $\eta_1$ and $c$. On the other hand, for $t \geq 1$, we have positive constants $B_1, B_2$ such that
			\begin{equation}
				\label{EstimateI3}
				B_1 \left( 1 - H \left( c \right) \right) \leq \frac{\left( 1 - ct \right)_+^{\frac{\eta_2 - H \left( c \right) - 1}{2}}}{t^{\frac{\left( 1 - H \left( c \right) \right) \left( \eta_2 - H \left( c \right) - 1 \right)}{2}}} \leq B_2 \left( 1 - H \left( c \right) \right).
			\end{equation}
			Thus, we get a constant $K_1 > 0$ such that
			\begin{align*}
				&\int\limits_{0}^{\infty} f \left( t \right) t^{\frac{\eta_1 - H \left( c \right) - 1}{2}} \left( 1 - ct \right)^{\frac{\eta_2 - H \left( c \right) - 1}{2}}_+ \mathrm{d}t \\
				&\leq K_1 \left[ \int\limits_{0}^{1} f \left( t \right) t^{\frac{\eta_1 - H \left( c \right) - 1}{2}} \mathrm{d}t + \int\limits_{\frac{1}{2}}^{1} f \left( t \right) \left( 1 - ct \right)^{\frac{\eta_2 - H \left( c \right) - 1}{2}} \mathrm{d}t + \left( 1 - H \left( c \right) \right) \int\limits_{1}^{\infty} f \left( t \right) t^{\frac{\eta_1 - H \left( c \right) - 1}{2} + \frac{\left( 1 - H \left( c \right) \right) \left( \eta_2 - H \left( c \right) - 1 \right)}{2}} \mathrm{d}t \right].
			\end{align*}
			Now, using Inequalities \eqref{I3} and \eqref{I4}, we get another constant $K_2 > 0$ such that
			\begin{align*}
				\int\limits_{0}^{\infty} f \left( t \right) t^{\frac{\eta_1 - H \left( c \right) - 1}{2}} \left( 1 - ct \right)^{\frac{\eta_2 - H \left( c \right) - 1}{2}}_+ \mathrm{d}t &\leq K_2 \left[ \left( \int\limits_{0}^{\frac{1}{2}} f \left( t \right) t^{\frac{p \left( \eta_1 + 1 - H \left( c \right) \right)}{2} - 1} \mathrm{d}t \right)^{\frac{1}{p}} + \left( \int\limits_{\frac{1}{2}}^{1} f \left( t \right) \left( 1 - ct \right)^{\frac{p \left( \eta_2 + 1 - H \left( c \right) \right)}{2} - 1} \mathrm{d}t \right)^{\frac{1}{p}} \right. \\
				&\left. + \left( \left( 1 - H \left( c \right) \right) \int\limits_{1}^{\infty} f \left( t \right) t^{\frac{p \left[ \eta_1 + 1 - H \left( c \right) + \left( 1 - H \left( c \right) \right) \left( \eta_2 - H \left( c \right) - 1 \right) \right]}{2} - 1} \mathrm{d}t \right)^{\frac{1}{p}} \right] \\
				&= K_2 \left[ \left( \int\limits_{0}^{\frac{1}{2}} f \left( t \right) t^{\frac{p\eta_1 - H \left( c \right) - 1}{2}} t^{\frac{\left( p - 1 \right) \left( 1 - H \left( c \right) \right)}{2}} \mathrm{d}t \right)^{\frac{1}{p}} \right. \\
				&\left. + \left( \int\limits_{\frac{1}{2}}^{1} f \left( t \right) \left( 1 - ct \right)^{\frac{p \eta_2 - H \left( c \right) - 1}{2}} \left( 1 - ct \right)^{\frac{\left( p - 1 \right) \left( 1 - H \left( c \right) \right)}{2}} \mathrm{d}t \right)^{\frac{1}{p}} \right. \\
				&\left. + \left( \left( 1 - H \left( c \right) \right) \int\limits_{1}^{\infty} f \left( t \right) t^{\frac{p \eta_1 - H \left( c \right) - 1}{2}} t^{\frac{\left( 1 - H \left( c \right) \right) \left( p \eta_2 - H \left( c \right) - 1 \right)}{2}} t^{- \frac{H \left( c \right) \left( p - 1 \right) \left( 1 - H \left( c \right) \right)}{2}} \mathrm{d}t \right)^{\frac{1}{p}} \right].
			\end{align*}
			Now, we notice that for $0 \leq t \leq \frac{1}{2}$, we have $t^{\frac{\left( p - 1 \right) \left( 1 - H \left( c \right) \right)}{2}} \leq M_1$ for some constant $M_1 > 0$. Similarly, for $\frac{1}{2} \leq t \leq 1$, there is some $M_2 > 0$ such that $\left( 1 - ct \right)^{\frac{\left( p - 1 \right) \left( 1 - H \left( c \right) \right)}{2}} \leq M_2$. Also, we notice that $H \left( c \right) \left( 1 - H \left( c \right) \right) = 0$ for any $c \in \mathbb{R}$. Hence, we get a constant $K_3 > 0$, depending only on $\eta_1, \eta_2, p$, and $c$, such that
			\begin{align*}
				\int\limits_{0}^{\infty} f \left( t \right) t^{\frac{\eta_1 - H \left( c \right) - 1}{2}} \left( 1 - ct \right)^{\frac{\eta_2 - H \left( c \right) - 1}{2}}_+ \mathrm{d}t &\leq K_3 \left[ \left( \int\limits_{0}^{\frac{1}{2}} f \left( t \right) t^{\frac{p\eta_1 - H \left( c \right) - 1}{2}} \mathrm{d}t \right)^{\frac{1}{p}} + \left( \int\limits_{\frac{1}{2}}^{1} f \left( t \right) \left( 1 - ct \right)^{\frac{p \eta_2 - H \left( c \right) - 1}{2}} \mathrm{d}t \right)^{\frac{1}{p}} \right. \\
				&\left. + \left( \left( 1 - H \left( c \right) \right) \int\limits_{1}^{\infty} f \left( t \right) t^{\frac{p \eta_1 - H \left( c \right) - 1}{2}} t^{\frac{\left( 1 - H \left( c \right) \right) \left( p \eta_2 - H \left( c \right) - 1 \right)}{2}} \mathrm{d}t \right)^{\frac{1}{p}} \right].
			\end{align*}
			Now, using the estimates given in Equations \eqref{EstimateI1}, \eqref{EstimateI2}, and \eqref{EstimateI3}, we get
			\begin{align*}
				&\int\limits_{0}^{\infty} f \left( t \right) t^{\frac{\eta_1 - H \left( c \right) - 1}{2}} \left( 1 - ct \right)^{\frac{\eta_2 - H \left( c \right) - 1}{2}}_+ \mathrm{d}t \leq K \left( \int\limits_{0}^{\infty} f \left( t \right) t^{\frac{p \eta_1 - H \left( c \right) - 1}{2}} \left( 1 - ct \right)_+^{\frac{p \eta_2 - H \left( c \right) - 1}{2}} \mathrm{d}t \right)^{\frac{1}{p}},
			\end{align*}
			where, the constant $K$ depends only on $p, \eta_1, \eta_2$, and $c$.
		\end{proof}
		The following corollary easily follows by replacing $t$ by $s_c^2 \left( \frac{t}{2} \right)$ and taking $\eta_1 = \eta_2 = \gamma$.
		\begin{corollary}
			\label{SCEstimate}
			Let $\gamma > 0$, and $f$ be the characteristic function of measurable subset of $\left( 0, \frac{\delta \left( X \right)}{2} \right)$. Then, we have for $p \geq 1$,
			\begin{equation}
				\label{LemmaEquation}
				\int\limits_{0}^{\frac{\delta \left( X_c \right)}{2}} f \left( t \right) s_c^{\gamma - H \left( c \right)} \left( t \right) \ \mathrm{d}t \leq K \left( \int\limits_{0}^{\frac{\delta \left( X_c \right)}{2}} f \left( t \right) s_c^{p \gamma - H \left( c \right)} \left( t \right) \ \mathrm{d}t \right)^{\frac{1}{p}},
			\end{equation}
			where $K$ is a constant that depends only on $p$, $c$, and $\gamma$, and $H$ is the heavyside function given by $H \left( s \right) = \begin{cases}
														1, & s \geq 0. \\
														0, & s < 0.
													\end{cases}$.
		\end{corollary}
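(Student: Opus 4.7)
The plan is to obtain Corollary \ref{SCEstimate} from Lemma \ref{MainLemmaUnified} by a direct change of variables, as the author hints. Specifically, I would start from the lemma applied with $\eta_1 = \eta_2 = \gamma$ and to a characteristic function $g$ on $(0, \infty)$, then substitute $s = s_c^2(t/2)$ in both the left-hand and right-hand integrals. The identifications $g(s) = f(t)$ (where $f$ is the characteristic function appearing in the corollary) preserve the ``characteristic function'' hypothesis, since the substitution is a smooth increasing bijection on the relevant ranges.

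Executing the change of variable, the double-angle formula \eqref{DoubleAngleFormula1} gives $\mathrm{d}s = s_c(t/2) s_c'(t/2)\, \mathrm{d}t = \tfrac{1}{2} s_c(t)\, \mathrm{d}t$, while \eqref{ScNonLinDE} gives $1 - cs = 1 - c s_c^2(t/2) = (s_c'(t/2))^2 \geq 0$, so the positive part is automatic. Therefore, on the left-hand side, the integrand
\[
s^{\frac{\gamma - H(c) - 1}{2}} (1 - cs)_+^{\frac{\gamma - H(c) - 1}{2}}\, \mathrm{d}s
= s_c^{\gamma - H(c) - 1}(t/2) \cdot (s_c'(t/2))^{\gamma - H(c) - 1} \cdot \tfrac{1}{2} s_c(t)\, \mathrm{d}t
\]
collapses, via $s_c(t/2) s_c'(t/2) = s_c(t)/2$, to $2^{-(\gamma - H(c))} s_c^{\gamma - H(c)}(t)\, \mathrm{d}t$. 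The identical computation on the right-hand side with $\gamma$ replaced by $p\gamma$ yields the factor $2^{-(p\gamma - H(c))} s_c^{p\gamma - H(c)}(t)\, \mathrm{d}t$. After plugging in and pulling the (now purely numerical) $2$-powers out of both sides and the $p$-th root, one absorbs them into the constant and reads off exactly \eqref{LemmaEquation}.

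I also need to verify that the transformed integration limits line up with $(0, \delta(X_c)/2)$. For $c \in \{0, -1\}$, $s_c$ is an increasing bijection of $(0, \infty)$ onto itself and $\delta(X_c)/2 = \infty$, so the substitution carries the full range $(0, \infty)$ to $(0, \delta(X_c)/2)$. For $c = 1$, the factor $(1 - cs)_+$ forces the effective integration range to $(0, 1)$, and under $s = \sin^2(t/2)$ this corresponds precisely to $t \in (0, \pi) = (0, \delta(X_1)/2)$. No step poses a real obstacle: the entire argument is bookkeeping of the substitution together with the two identities \eqref{DoubleAngleFormula1} and \eqref{ScNonLinDE}, and the only mild care required is in tracking that the exponents combine as $(\gamma - H(c) - 1) + 1 = \gamma - H(c)$ so that exactly the correct power of $s_c(t)$ appears in the final inequality.
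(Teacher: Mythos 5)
Your proposal is correct and is essentially identical to the paper's own (one-line) proof, which obtains Corollary \ref{SCEstimate} from Lemma \ref{MainLemmaUnified} precisely by taking $\eta_1 = \eta_2 = \gamma$ and substituting $t \mapsto s_c^2\left(\frac{t}{2}\right)$, with the powers of $2$ absorbed into the constant exactly as you describe. The only small slip is in your endpoint bookkeeping for $c = 1$: with the paper's convention $\delta\left(\mathbb{S}^n\right) = \pi$, the substitution carries $\left(0, \frac{\delta(X_1)}{2}\right) = \left(0, \frac{\pi}{2}\right)$ onto $\left(0, \frac{1}{2}\right)$ rather than onto $\left(0, 1\right)$, which is harmless since the transformed characteristic function is supported there and the positive part is still automatic.
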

		Now that we have all the tools we require, we present the main result of the article, which is the end-point estimate for the totally-geodesic $k$-plane transforms. The ``end-points" for $p$ arise due to existence conditions for the Euclidean space $\mathbb{R}^n$ (see \cite{Duoandikoetxea}) and real hyperbolic space $\mathbb{H}^n$ (see \cite{BerensteinRubin}). In these cases, the end-points are $p = \frac{n}{k}$ and $p = \frac{n - 1}{k - 1}$, respectively, for $\mathbb{R}^n$ and $\mathbb{H}^n$. For the case of the sphere, we do not have an end-point for the existence of the $k$-plane transform. In fact, in \cite{RubinInversion}, the authors prove that the $k$-plane transform is a bounded operator from $L^p \left( \mathbb{S}^n \right)$ to $L^p \left( \Xi_k \left( \mathbb{S}^n \right) \right)$, for all $p \geq 1$. Therefore, it is natural to ask about $L^p$-improving mapping properties of the $k$-plane transform. Particularly, one asks whether we have $L^p$-$L^{\infty}$ boundedness of the $k$-plane transform. It was proved, through an explicit example, in \cite{KumarRay} that with the natural measure on the domain and codomain, such a bound cannot be expected. In fact, the authors prove that we cannot have $L^{p, 1}-L^{\infty}$ boundedness as well, for any $p < + \infty$. However, if we add a cosine weight to the codomain ($\Xi_k \left( \mathbb{S}^n \right)$), analogous to the hyperbolic result of \cite{KumarRay}, the $L^{p, 1}-L^{\infty}$ inequality is expected only if $p \geq \frac{n}{k}$. This gives a natural end-point, namely $p = \frac{n}{k}$. We now state the unified end-point result for the totally-geodesic $k$-plane transform.
		\begin{theorem}
			\label{EndPointEstimate}
			Let $X$ be a simply connected space of constant curvature $c$, and let $f: X \rightarrow \mathbb{C}$ be a radial $L^{p, 1}$-function, where $L^{p, 1}$ is the Lorentz space, and $p = \frac{n}{k}$ for $X = \mathbb{R}^n$ and $\mathbb{S}^n$, and $p = \frac{n - 1}{k - 1}$, for $X = \mathbb{H}^n$. Then, we have for $2 \leq k \leq n - 1$,
			\begin{equation}
				\label{MainEquation}
				\| s_c' \left( d \left( o, \cdot \right) \right) R_kf \left( \cdot \right) \|_{L^{\infty} \left( \Xi_k \right)} \leq K \| f \|_{L^{p, 1} \left( X \right)}.
			\end{equation}
			Here, the constant $K$ depends only on $n$ and $k$.
		\end{theorem}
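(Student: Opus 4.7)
The plan is a three-step reduction. Since $|R_k f(\xi)| \le R_k |f|(\xi)$ and the Lorentz norm is insensitive to modulus, I may assume $f \ge 0$. First I pass from $f$ to characteristic functions via the layer-cake decomposition; next I apply the unified formula of Theorem~\ref{TGRTRadialFunction} and discard a bounded weight; finally I invoke Corollary~\ref{SCEstimate} with the exponent tuned so that the right-hand side matches the Lorentz norm via the polar decomposition.

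For the first step, write $\tilde f = \int_0^{\infty} \chi_{E_\alpha}\,d\alpha$ where $E_\alpha = \{s : \tilde f(s) > \alpha\}$, and set $g_\alpha(x) = \chi_{E_\alpha}(s_c(d(o,x)/2))$. Minkowski's integral inequality gives $R_k f(\xi) \le \int_0^{\infty} R_k g_\alpha(\xi)\,d\alpha$, while the identity $\|f\|_{L^{p,1}} = p \int_0^{\infty} |\{f > \alpha\}|^{1/p}\,d\alpha$ means it suffices to prove a uniform bound of the form $s_c'(d(o,\xi))\, R_k g_\alpha(\xi) \le K |\operatorname{supp} g_\alpha|^{1/p}$, with $K$ independent of both $\xi$ and $\alpha$. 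This reduces the problem to a single radial characteristic function $g = \chi_A$, where $A = \{x : s_c(d(o,x)/2) \in E\}$.

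Applying Theorem~\ref{TGRTRadialFunction} to $g$ gives an explicit integral against the weight $[\,1 - ((\ln s_c)'(t)/(\ln s_c)'(d(o,\xi)))^2\,]^{k/2-1}$. The three relevant forms of $(\ln s_c)'$ are $1/t$, $\coth t$, and $\cot t$, all decreasing on $(0,\delta(X)/2)$, so for $t \ge d(o,\xi)$ the ratio lies in $[0,1]$; since $k \ge 2$ the exponent $k/2-1$ is non-negative, hence the weight is bounded by $1$. Enlarging the integration to $[0,\delta(X)/2]$ therefore yields
$$s_c'(d(o,\xi))\,R_k g(\xi) \le K \int_0^{\delta(X)/2} \chi_E(s_c(t/2))\, s_c^{k-1}(t)\, dt.$$
Corollary~\ref{SCEstimate} with $\gamma = k - 1 + H(c)$ (so that $\gamma - H(c) = k-1$) upgrades this to a $p$-th-root form with exponent $p\gamma - H(c)$; solving $p\gamma - H(c) = n-1$ forces $p = (n-1+H(c))/(k-1+H(c))$, which evaluates to $n/k$ when $c \in \{0,+1\}$ and to $(n-1)/(k-1)$ when $c=-1$, exactly the end-points in the statement. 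By the polar decomposition \eqref{PD}, the resulting integral is a constant multiple of $|A|$, and $|A|^{1/p}$ is comparable to $\|g\|_{L^{p,1}(X)}$; reinstating the layer-cake completes the proof.

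The main obstacle I anticipate is not in any single manipulation but in arranging the exponents so that one Heaviside-adjusted lemma yields the distinct end-points $n/k$ (Euclidean and spherical) and $(n-1)/(k-1)$ (hyperbolic) simultaneously; this is precisely why the $H(c)$-shift in Lemma~\ref{MainLemmaUnified} was built in. A secondary technical point, which forces the restriction $k \ge 2$, is the weight bound in the middle step: when $k=1$ the exponent $k/2 - 1 = -1/2$ makes the weight singular near $t = d(o,\xi)$ and unbounded in general, consistent with Remark~\ref{RemarkHnEndPoint}.
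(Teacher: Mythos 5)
Your proposal is correct and follows essentially the same route as the paper: bound the weight $\bigl[1 - \bigl((\ln s_c)'(t)/(\ln s_c)'(d(o,\xi))\bigr)^2\bigr]^{\frac{k}{2}-1}$ by $1$ using monotonicity of $(\ln s_c)'$ and $k \ge 2$, then apply Corollary \ref{SCEstimate} with $\gamma = k-1+H(c)$ and identify the result with $\mu(E)^{1/p} \simeq \|f\|_{L^{p,1}(X)}$ via polar decomposition. The only difference is cosmetic: you prove the reduction to radial characteristic functions by an explicit layer-cake/Minkowski argument, whereas the paper simply cites the standard restricted-type reduction from Stein--Weiss, and you make explicit the exponent bookkeeping that forces $p = \frac{n-1+H(c)}{k-1+H(c)}$, which the paper leaves implicit.
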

		\begin{proof}
			It is known that to prove Equation \eqref{MainEquation}, it is enough to consider characteristic functions of measurable radial sets of $X$ with finite measure (see \cite{SteinWeissFA}). Let $E$ be a measurable radial subset of $X$ of finite measure, and let $f$ be its characteristic function. Thus, there is a function $\tilde{f}$ such that $f \left( x \right) = \tilde{f} \left( s_c \left( \frac{d \left( o, x \right)}{2} \right) \right)$ (see Definition \ref{RadialFunctions}). Then, we have by the polar decomposition of $X$,
			$$\| f \|_{L^{p, 1} \left( X \right)} = \mu \left( E \right)^{\frac{1}{p}} = K \left( \int\limits_{0}^{\frac{\delta \left( X \right)}{2}} \tilde{f} \left( s_c \left( \frac{r}{2} \right) \right) s_c^{n - 1} \left( r \right) \mathrm{d}r \right)^{\frac{1}{p}}.$$
			Since $f$ is a characteristic function of a radial set, $\tilde{f} \circ s_c$ is also a characteristic function of a measurable subset of $\left[ 0, \frac{\delta \left( X \right)}{2} \right]$. Now, we can use Corollary \ref{SCEstimate} in our proof. To begin, we first consider the function $G \left( t \right) = \left( \ln s_c \right)' \left( t \right) = \frac{s_c' \left( t \right)}{s_c \left( t \right)}$. Then,
			$$G' \left( t \right) = \dfrac{s_c \left( t \right) s_c'' \left( t \right) - \left( s_c' \right)^2 \left( t \right)}{s_c^2 \left( t \right)}.$$
			Using Equations \eqref{CurvatureDE} and \eqref{ScNonLinDE}, we have $s_c s_c'' - \left( s_c' \right)^2 = - \left( cs_c + \left( s_c' \right)^2 \right) = -1$. Thus, $G$ is a decreasing function, and hence for $t \geq d \left( o, \xi \right)$, we have $\left( \ln s_c \right)' \left( t \right) \leq \left( \ln s_c \right)' \left( d \left( o, \xi \right) \right)$. Consequently, we get
			$$0 \leq \left[ 1 - \left( \dfrac{\left( \ln s_c \right)' \left( t \right)}{\left( \ln s_c \right)' \left( d \left( o, \xi \right) \right)} \right)^2 \right]^{\frac{k}{2} - 1} \leq 1,$$
			for $k \geq 2$.	Now, using Theorem \ref{TGRTRadialFunction} and Corollary \ref{SCEstimate}, we have
			\begin{align*}
				\left| s_c' \left( d \left( o, \xi \right) \right) R_kf \left( \xi \right) \right| &= \left| K \int\limits_{d \left( o, \xi \right)}^{\frac{\delta \left( X \right)}{2}} \tilde{f} \left( s_c \left( \frac{t}{2} \right) \right) \left[ 1 - \left( \dfrac{\left( \ln s_c \right)' \left( t \right)}{\left( \ln s_c \right)' \left( d \left( o, \xi \right) \right)} \right)^{2} \right]^{\frac{k}{2}} s_c^{k - 1} \left( t \right) \mathrm{d}t \right| \\
				&\leq K \int\limits_{0}^{\frac{\delta \left( X \right)}{2}} \tilde{f} \left( s_c \left( \frac{t}{2} \right) \right) s_c^{k - 1} \left( t \right) \mathrm{d}t \\
				&\leq K \left( \int\limits_{0}^{\frac{\delta \left( X \right)}{2}} \tilde{f} \left( s_c \left( \frac{t}{2} \right) \right) s_c^{n - 1} \left( t \right) \mathrm{d}t \right)^{\frac{1}{p}} = K \| f \|_{L^{p, 1} \left( X \right)}.
			\end{align*}
		\end{proof}
		We now give a unified proof for the case $k = 1$. We have mentioned in Remark \ref{RemarkHnEndPoint} that this end-point can arise only for $c = 0$ or $c = 1$.
		\begin{theorem}
			\label{XRayEndPointUnified}
			Let $X$ be a simply connected space of constant curvature $c$, where $c = 0$ or $c = 1$. Let $f: X \rightarrow \mathbb{C}$ be a radial $L^{n, 1}$-function. Then, we have,
			\begin{equation}
				\label{XRAyEndPointBoundedness}
				\| s_c' \left( d \left( o, \cdot \right) \right) R_1f \|_{\infty} \leq K \| f \|_{L^{n, 1} \left( X \right)}.
			\end{equation}
		\end{theorem}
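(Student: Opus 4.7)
The plan is to mirror the proof of Theorem \ref{EndPointEstimate}, the new difficulty being that the kernel factor $[1 - ((\ln s_c)'(t)/(\ln s_c)'(r))^2]^{k/2 - 1}$ now has exponent $-1/2$ when $k = 1$ and so blows up as $t \to r^+$, ruling out the trivial bound by $1$ used for $k \geq 2$. After the Stein--Weiss reduction (cf.\ \cite{SteinWeissFA}) to characteristic functions of radial sets $E \subseteq X$ of finite measure, I would fix $\xi$, set $r := d(o, \xi)$ and $w_r := s_c^2(r/2)$ (so that $s_c'(r) = 1 - 2cw_r$ by Equation \eqref{DoubleAngleFormula3}), and specialise Theorem \ref{TGRTRadialFunction} to $k = 1$.

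The key step is to convert the singular kernel into a Beta-type integrand. A short computation from Equations \eqref{DoubleAngleFormula2} and \eqref{ScNonLinDE} gives the identity $s_c^2(t) - s_c^2(r) = s_c(t-r)\, s_c(t+r)$, so that
$$\left[1 - \left(\frac{(\ln s_c)'(t)}{(\ln s_c)'(r)}\right)^2\right]^{-1/2} = \frac{s_c'(r)\, s_c(t)}{\sqrt{s_c^2(t) - s_c^2(r)}}.$$
Substituting $w = s_c^2(t/2)$, which (via the double-angle identities) gives $s_c^2(t) = 4w(1-cw)$ and $s_c^2(t) - s_c^2(r) = 4(w - w_r)(1 - c(w + w_r))$, and then rescaling $z := (w - w_r)/s_c'(r)$, collapses Equation \eqref{RadialFunctionEquation} for $k = 1$ into
$$s_c'(r) R_1 f(\xi) = K\, s_c'(r) \int_0^{z_{\max}} \tilde f\!\left(\sqrt{w_r + z\, s_c'(r)}\right) \frac{dz}{\sqrt{z(1 - cz)}},$$
with $z_{\max} = \infty$ when $c = 0$ and (a short computation shows) $z_{\max} = 1/2$ when $c = 1$.

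This is precisely the form to which Lemma \ref{MainLemmaUnified} applies with $\eta_1 = \eta_2 = 1$ and $p = n$, since the inner function of $z$ is still a characteristic function of a measurable subset of $(0, z_{\max}]$. Undoing the substitution via $v = \sqrt{w_r + z\, s_c'(r)}$ and invoking the crude estimates $v^2 - w_r \leq v^2$ and $1 - c(w_r + v^2) \leq 1 - cv^2$ (both valid for $c, w_r \geq 0$), the resulting right-hand integral is bounded (up to a constant) by $(s_c'(r))^{-(n-1)} \int_0^{v_{\max}} \tilde f(v)\, v^{n-1}(1 - cv^2)^{(n-2)/2}\, dv$, and a direct polar decomposition on $X$ identifies the latter with a constant multiple of $\|f\|_{L^{n,1}(X)}^n$. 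Combining the prefactor $s_c'(r)$ with the $(s_c'(r))^{-(n-1)/n}$ picked up from Lemma \ref{MainLemmaUnified} leaves $(s_c'(r))^{1/n}$, which is bounded by $1$ since $s_c' \leq 1$ for $c \in \{0, 1\}$ and $r \in [0, \delta(X)/2]$, giving the desired $L^\infty$ bound after taking the supremum over $\xi$.

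The hard part is recognising that the factorisation $s_c^2(t) - s_c^2(r) = s_c(t-r)\, s_c(t+r)$, combined with the change of variables $w = s_c^2(t/2)$, is exactly what turns the problematic $-1/2$ exponent into the Beta-type weight that Lemma \ref{MainLemmaUnified} was tailored to digest; once this manipulation is spotted, the weight $s_c'(r)$ built into the statement absorbs the leftover factor effortlessly, and no further machinery beyond what is already developed in the paper is needed.
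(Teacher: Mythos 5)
Your proposal is correct and is essentially the paper's own argument: the same simplification of the $k=1$ kernel to $s_c'(r)\,s_c(t)\left(s_c^2(t)-s_c^2(r)\right)^{-1/2}$ via Equation \eqref{ScNonLinDE}, the same composite change of variables (your $z$ is exactly the paper's final variable $\bigl(s_c^2(t/2)-s_c^2(r/2)\bigr)/s_c'(r)$), the same application of Lemma \ref{MainLemmaUnified} with $\eta_1=\eta_2=1$, $p=n$, and the same power bookkeeping leaving $(s_c'(r))^{1/n}\le 1$. The only blemish is the claim $z_{\max}=1/2$ for $c=1$: with the geometrically correct upper limit $t\le \pi-d(o,\xi)$ one gets $z_{\max}=1$ (and $z_{\max}\ge 1$ if one integrates literally to $\delta(X)/2$), but since Lemma \ref{MainLemmaUnified} only needs a characteristic function on $(0,\infty)$ with the $(1-cz)_+$ weight, this value plays no role in the estimate.
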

		\begin{proof}
			From Equation \eqref{RadialFunctionEquation}, we have,
			\begin{align*}
				&s_c' \left( d \left( o, \xi \right) \right) R_1f \left( \xi \right) \\
				&= K \int\limits_{d \left( o, \xi \right)}^{\frac{\delta \left( X \right)}{2}} \tilde{f} \left( s_c \left( \frac{t}{2} \right) \right) \left[ 1 - \left( \frac{\left( \ln s_c \right)' \left( t \right)}{\left( \ln s_c \right)' \left( d \left( o, \xi \right) \right)} \right) \right]^{- \frac{1}{2}} \mathrm{d}t \\
				&= K s_c' \left( d \left( o, \xi \right) \right) \times \\
				&\int\limits_{d \left( o, \xi \right)}^{\frac{\delta \left( X \right)}{2}} \tilde{f} \left( s_c \left( \frac{t}{2} \right) \right) \left[ \left( s_c \left( t \right) \right)^2 \left( s_c' \left( d \left( o, \xi \right) \right) \right)^2 - \left( s_c' \left( t \right) \right)^2 \left( s_c \left( d \left( o, \xi \right) \right) \right)^2 \right]^{- \frac{1}{2}} s_c \left( t \right) \mathrm{d}t.
			\end{align*}
			By using Equation \eqref{ScNonLinDE}, and simplifying, we get
			\begin{align*}
				s_c' \left( d \left( o, \xi \right) \right) R_1f \left( \xi \right) &= K s_c' \left( d \left( o, \xi \right) \right) \int\limits_{d \left( o, \xi \right)}^{\frac{\delta \left( X \right)}{2}} \tilde{f} \left( s_c \left( \frac{t}{2} \right) \right) \left( s_c^2 \left( t \right) - s_c^2 d \left( o, \xi \right) \right)^{- \frac{1}{2}} s_c \left( t \right) \mathrm{d}t.
			\end{align*}
			Now, let us substitute $s_c \left( \frac{t}{2} \right) = x \ s_c \left( \frac{d \left( o, \xi \right)}{2} \right)$. Then, we would have $s_c' \left( \frac{t}{2} \right) \mathrm{d}t = 2 s_c \left( \frac{d \left( o, \xi \right)}{2} \right) \mathrm{d}x$. When $t = d \left( o, \xi \right)$, we have $x = 1$; and when $t = \frac{\delta \left( X \right)}{2}$, we have $x = \frac{s_c \left( \frac{\delta \left( X \right)}{4} \right)}{s_c \left( \frac{d \left( o, \xi \right)}{2} \right)}$. To make the notation less cumbersome, we use $\gamma = \frac{s_c \left( \frac{\delta \left( X \right)}{4} \right)}{s_c \left( \frac{d \left( o, \xi \right)}{2} \right)}$. Using the double angle formula (Equation \eqref{DoubleAngleFormula1}), we get
			\begin{align*}
				s_c' \left( d \left( o, \xi \right) \right) R_1f \left( \xi \right) &= K s_c' \left( d \left( o, \xi \right) \right) \int\limits_{1}^{\gamma} \tilde{f} \left( x \ s_c \left( \frac{d \left( o, \xi \right)}{2} \right) \right) \left[ 4x^2 s_c^2 \left( \frac{d \left( o, \xi \right)}{2} \right) \left( 1 - cx^2 s_c^2 \left( \frac{d \left( o, \xi \right)}{2} \right) \right) \right. \\
				&\left. - 4s_c^2 \left( \frac{d \left( o, \xi \right)}{2} \right) \left( 1 - cs_c^2 \left( \frac{d \left( o, \xi \right)}{2} \right) \right) \right]^{- \frac{1}{2}} x \ s_c^2 \left( \frac{d \left( o, \xi \right)}{2} \right) \mathrm{d}x \\
				&= K s_c' \left( d \left( o, \xi \right) \right) s_c \left( \frac{d \left( o, \xi \right)}{2} \right) \int\limits_{1}^{\gamma} \tilde{f} \left( x \ s_c \left( \frac{d \left( o, \xi \right)}{2} \right) \right) \left[ \left( x^2 - 1 \right) - cs_c^2 \left( \frac{d \left( o, \xi \right)}{2} \right) \left( x^4 - 1 \right) \right]^{- \frac{1}{2}} x \ \mathrm{d}x \\
				&= K s_c' \left( d \left( o, \xi \right) \right) s_c \left( \frac{d \left( o, \xi \right)}{2} \right) \int\limits_{1}^{\gamma} \tilde{f} \left( x \ s_c \left( \frac{d \left( o, \xi \right)}{2} \right) \right) \left( x^2 - 1 \right)^{- \frac{1}{2}} \left[ 1 - \left( x^2 + 1 \right) cs_c^2 \left( \frac{d \left( o, \xi \right)}{2} \right) \right]^{- \frac{1}{2}} x \ \mathrm{d}x.
			\end{align*}
			Using Equation \eqref{DoubleAngleFormula3}, we have,
			\begin{align*}
				s_c' \left( d \left( o, \xi \right) \right) R_1f \left( \xi \right) &= K s_c' \left( d \left( o, \xi \right) \right) s_c \left( \frac{d \left( o, \xi \right)}{2} \right) \int\limits_{1}^{\gamma} \tilde{f} \left( x \ s_c \left( \frac{d \left( o, \xi \right)}{2} \right) \right) \left( x^2 - 1 \right)^{- \frac{1}{2}} \times \\
				&\left[ s_c' \left( d \left( o, \xi \right) \right) - \left( x^2 - 1 \right) cs_c^2 \left( \frac{d \left( o, \xi \right)}{2} \right) \right]^{- \frac{1}{2}} x \ \mathrm{d}x \\
				&= K \left( s_c' \right)^{\frac{1}{2}} \left( d \left( o, \xi \right) \right) s_c \left( \frac{d \left( o, \xi \right)}{2} \right) \int\limits_{1}^{\gamma} \tilde{f} \left( x \ s_c \left( \frac{d \left( o, \xi \right)}{2} \right) \right) \left( x^2 - 1 \right)^{- \frac{1}{2}} \times \\
				&\left[ 1 - c \left( x^2 - 1 \right) \frac{s_c^2 \left( \frac{d \left( o, \xi \right)}{2} \right)}{s_c' \left( d \left( o, \xi \right) \right)} \right]^{- \frac{1}{2}} x \ \mathrm{d}x.
			\end{align*}
			To make the notation less cumbersome, let us write $\alpha = s_c \left( \frac{d \left( o, \xi \right)}{2} \right)$, $\beta = s_c' \left( d \left( o, \xi \right) \right)$, $\mu = s_c \left( \frac{\delta \left( X \right)}{4} \right)$. That is,
			\begin{align*}
				s_c' \left( d \left( o, \xi \right) \right) R_1f \left( \xi \right) &= K \alpha \beta^{\frac{1}{2}} \int\limits_{1}^{\gamma} \tilde{f} \left( \alpha x \right) \left( x^2 - 1 \right)^{- \frac{1}{2}} \left[ 1 - c \frac{\alpha^2}{\beta} \left( x^2 - 1 \right) \right]^{- \frac{1}{2}} x \mathrm{d}x.
			\end{align*}
			Upon substituting $x^2 - 1 = \frac{\beta}{\alpha^2} t$, we get
			\begin{align*}
				s_c' \left( d \left( o, \xi \right) \right) R_1f \left( \xi \right) \leq K \beta \int\limits_{0}^{\zeta} \tilde{f} \left( \alpha \sqrt{\frac{\beta}{\alpha^2} t + 1} \right) t^{- \frac{1}{2}} \left[ 1 - ct \right]^{- \frac{1}{2}} \mathrm{d}t,
			\end{align*}
			where, $\zeta = \frac{\alpha^2}{\beta} \left( \frac{\mu^2}{\alpha^2} - 1 \right)$.
			Now, using Lemma \ref{MainLemmaUnified} with $p = n$ and $\eta_1 = \eta_2 = 1$, we get
			\begin{align*}
				s_c' \left( d \left( o, \xi \right) \right) R_1f \left( \xi \right) \leq K \beta \left[ \int\limits_{0}^{\zeta} \tilde{f} \left( \alpha \sqrt{\frac{\beta}{\alpha^2} t + 1} \right) t^{\frac{n}{2} - 1} \left( 1 - ct \right)^{\frac{n}{2} - 1} \mathrm{d}t \right]^{\frac{1}{n}}.
			\end{align*}
			Now, we substitute $t = \frac{\alpha^2}{\beta} u^2$ to get
			\begin{align*}
				&s_c' \left( d \left( o, \xi \right) \right) R_1f \left( \xi \right) \leq K \alpha \beta^{\frac{1}{2}} \left[ \int\limits_{0}^{\nu} \tilde{f} \left( \alpha \sqrt{u^2 + 1} \right) u^{n - 1} \left( 1 - cu^2 \frac{\alpha^2}{\beta} \right)^{\frac{n}{2} - 1} \mathrm{d}u \right]^{\frac{1}{n}},
			\end{align*}
			where, $\nu = \left( \frac{\mu^2}{\alpha^2} - 1 \right)^{\frac{1}{2}}$. Now, we have,
			\begin{align*}
				s_c' \left( d \left( o, \xi \right) \right) R_1f \left( \xi \right) &\leq K \alpha \beta^{\frac{1}{2} - \frac{1}{n}} \left[ \int\limits_{0}^{\nu} \tilde{f} \left( \alpha \sqrt{u^2 + 1} \right) \left( u^2 + 1 \right)^{\frac{n}{2} - 1} \left( 1 - cu^2 \frac{\alpha^2}{\beta} \right)^{\frac{n}{2} - 1} u \ \mathrm{d}u \right]^{\frac{1}{n}} \\
				&= K \alpha \beta^{\frac{1}{n}} \left[ \int\limits_{0}^{\nu} \tilde{f} \left( \alpha \sqrt{u^2 + 1} \right) \left( u^2 + 1 \right)^{\frac{n}{2} - 1} \left( \beta - c \alpha^2 u^2 \right)^{\frac{n}{2} - 1} u \ \mathrm{d}u \right]^{\frac{1}{n}} \\
				&\leq K \alpha \left[ \int\limits_{0}^{\nu} \tilde{f} \left( \alpha \sqrt{u^2 + 1} \right) \left( u^2 + 1 \right)^{\frac{n}{2} - 1} \left( \beta - c \alpha^2 u^2 \right)^{\frac{n}{2} - 1} u \ \mathrm{d}u \right]^{\frac{1}{n}},
			\end{align*}
			since $\beta = s_c' \left( d \left( o, \xi \right) \right) \leq 1$.
			Resubstituting the values of $\alpha$ and $\beta$, and using Equation \eqref{ScNonLinDE}, we get
			\begin{align*}
				&s_c' \left( d \left( o, \xi \right) \right) R_1f \left( \xi \right) \\
				&\leq K s_c \left( \frac{d \left( o, \xi \right)}{2} \right) \left[ \int\limits_{0}^{\nu} \tilde{f} \left( \sqrt{u^2 + 1} s_c \left( \frac{d \left( o, \xi \right)}{2} \right) \right) \left( u^2 + 1 \right)^{\frac{n}{2} - 1} \left( 1 - c \left( u^2 + 2 \right) s_c^2 \left( \frac{d \left( o, \xi \right)}{2} \right) \right)^{\frac{n}{2} - 1} u \ \mathrm{d}u \right]^{\frac{1}{n}} \\
				&\leq K s_c \left( \frac{d \left( o, \xi \right)}{2} \right) \left[ \int\limits_{0}^{\nu} \tilde{f} \left( \sqrt{u^2 + 1} s_c \left( \frac{d \left( o, \xi \right)}{2} \right) \right) \left( u^2 + 1 \right)^{\frac{n}{2} - 1} \left( 1 - c \left( u^2 + 1 \right) s_c^2 \left( \frac{d \left( o, \xi \right)}{2} \right) \right)^{\frac{n}{2} - 1} u \ \mathrm{d}u \right]^{\frac{1}{n}} .
			\end{align*}
			Finally, by substituting $s_c \left( \frac{t}{2} \right) = \sqrt{u^2 + 1} s_c \left( \frac{d \left( o, \xi \right)}{2} \right)$, and simplifying, we obtain
			\begin{align*}
				s_c' \left( d \left( o, \xi \right) \right) R_1f \left( \xi \right) &\leq K \left[ \int\limits_{d \left( o, \xi \right)}^{\frac{\delta \left( X \right)}{2}} \tilde{f} \left( s_c \left( \frac{t}{2} \right) \right) s_c^{n - 1} \left( t \right) \mathrm{d}t \right]^{\frac{1}{n}} \leq K \| f \|_{n, 1}.
			\end{align*}
			This completes the proof!
		\end{proof}
	\section{Some inequalities for Hypergeometric Functions}
		\label{SpecialFunctionSection}
			We take a short detour for the theme of this article and comment upon a consequence of Lemma \ref{MainLemmaUnified} about hypergeometric functions. We first give certain definitions. These definitions can be found in \cite{Olver}. First, we give the definition of Appell's hypergeometric function of the first kind.
			\begin{equation}
				\label{Appell1Function}
				F_1 \left( \alpha; \beta_1, \beta_2; \gamma; x, y \right) = \frac{\Gamma \left( \gamma \right)}{\Gamma \left( \alpha \right) \Gamma \left( \gamma - \alpha \right)} \int\limits_{0}^{1} \frac{u^{\alpha - 1} \left( 1 - u \right)^{\gamma - \alpha - 1}}{\left( 1 - ux \right)^{\beta_1} \left( 1 - uy \right)^{\beta_2}} \mathrm{d}u,
			\end{equation}
			where, $\text{Re } \alpha > 0$, $\text{Re} \left( \gamma - \alpha \right)  > 0$, and $x, y \notin \left[ 1, \infty \right)$.
			
			We also require the following transformation of variable in Appell's function (see \cite{Olver}),
			\begin{equation}
				\label{AppellTransformationVariable}
				F_1 \left( \alpha; \beta_1; \beta_2; \gamma; x, y \right) = \left( 1 - x \right)^{- \alpha} F_1 \left( \alpha; \gamma - \beta_1 - \beta_2; \beta_2; \gamma; \frac{x}{x - 1}; \frac{y - x}{1 - x} \right).
			\end{equation}
			
			On the other hand, the Gauss hypergeometric function is given by
			\begin{equation}
				\label{Definition2F1}
				{}_2F_1 \left( \alpha, \beta; \gamma; z \right) = \frac{\Gamma \left( \gamma \right)}{\Gamma \left( \beta \right) \Gamma \left( \gamma - \beta \right)} \int\limits_{0}^{1} \frac{u^{\beta - 1} \left( 1 - u \right)^{\gamma - \beta - 1}}{\left( 1 - zu \right)^{\alpha}} \mathrm{d}u,
			\end{equation}
			where, $\text{Re } \gamma > \text{Re } \beta > 0$ and $z \notin \left[ 1, \infty \right)$.
			
			The following useful transformation of variables for hypergeometric function is available in \cite{Olver}.
			\begin{equation}
				\label{HypergeometricTransformationVariable}
				{}_2F_1 \left( \alpha, \beta; \gamma; z \right) = \left( 1 - z \right)^{- \alpha} {}_2F_1 \left( \alpha, \gamma - \beta; \gamma; \frac{z}{z - 1} \right),
			\end{equation}
			for $z \notin \left[ 1, \infty \right)$.
			
			With these definitions made, we give an inequality about Appell's hypergeometric function of the first kind.
			\begin{theorem}
				\label{AppellInequalities}
				For $c \in \left\lbrace -1, 0, 1 \right\rbrace$ and $0 < a < b < \frac{1}{1 - H \left( -c \right)}$, we have,
				\begin{equation}
					\label{ANotZeroUnified}
					\begin{aligned}
						&\left( 1 - ca \right)^{- \frac{H \left( c \right) + 1}{2p'}} a^{\frac{1 - H \left( c \right)}{2p'}} \left( 1 - \frac{a}{b} \right)^{\frac{1}{p'}} F_1 \left( 1; 1 + \frac{\eta_1 + \eta_2}{2} - H \left( c \right), \frac{H \left( c \right) + 1 - \eta_2}{2}; 2; 1 - \frac{a}{b}, \frac{b - a}{\left( 1 - ca \right) b} \right) \\
						&\leq K \left[ F_1 \left( 1; 1 - H \left( c \right) + p \left( \frac{\eta_1 + \eta_2}{2}, \frac{H \left( c \right) + 1 - p \eta_2}{2}; 2; 1 - \frac{a}{b}, \frac{b - a}{\left( 1 - ca \right) b} \right) \right) \right]^{\frac{1}{p}}.
					\end{aligned}
				\end{equation}
				As special cases, we have for $0 < a < b < \infty$,
				\begin{equation}
					\label{ANotZeroCMinus1}
					\begin{aligned}
						&\left( \frac{a}{1 + a} \right)^{\frac{1}{2p'}} \left( 1 - \frac{a}{b} \right)^{\frac{1}{p'}} F_1 \left( 1; 1 + \frac{\eta_1 + \eta_2}{2}, 1 - \frac{\eta_2}{2}; 2; 1 - \frac{a}{b}, \frac{b - a}{\left( 1  + a \right) b} \right) \\
						&\leq K \left[ F_1 \left( 1; 1 - p \left( \frac{\eta_1 + \eta_2}{2} \right), \frac{1 - p \eta_2}{2}; 2; 1 - \frac{a}{b}, \frac{b - a}{\left( 1 + a \right) b} \right) \right]^{\frac{1}{p}},
					\end{aligned}
				\end{equation}
				\begin{equation}
					\label{ANotZeroCZero1}
					\begin{aligned}
						&\left( 1 - \frac{a}{b} \right)^{\frac{1}{p'}} F_1 \left( 1; \frac{\eta_1 + \eta_2}{2}, 1 - \frac{\eta_2}{2}; 2; 1 - \frac{a}{b}, 1 - \frac{a}{b} \right) \leq K \left[ F_1 \left( 1; p \left( \frac{\eta_1 + \eta_2}{2} \right), 1 - \frac{p \eta_2}{2}; 2; 1 - \frac{a}{b}, 1 - \frac{a}{b} \right) \right]^{\frac{1}{p}},
					\end{aligned}
				\end{equation}
				and
				\begin{equation}
					\label{ANotZeroCZero2}
					\begin{aligned}
						a^{- \frac{1}{p'}} \left( 1 - \frac{a}{b} \right)^{\frac{1}{p'}} {}_2F_1 \left( 1 + \frac{\eta_1}{2}, 1; 2; 1 - \frac{a}{b} \right) \leq K \left[ {}_2F_1 \left( 1 + \frac{p \eta_1}{2}, 1; 2; 1 - \frac{a}{b} \right) \right]^{\frac{1}{p}}.
					\end{aligned}
				\end{equation}
				Also, for $0 < a < b < 1$, we have,
				\begin{equation}
					\label{ANotZeroC1}
					\begin{aligned}
						&\left( 1 - a \right)^{- \frac{1}{p'}} \left( 1 - \frac{a}{b} \right)^{\frac{1}{p'}} F_1 \left( 1; \frac{\eta_1 + \eta_2}{2}, 1 - \frac{\eta_2}{2}; 1 - \frac{a}{b}, \frac{b - a}{\left( 1 - a \right) b} \right) \leq K \left[ F_1 \left( 1; p \left( \frac{\eta_1 + \eta_2}{2} \right), 1 - \frac{p \eta_2}{2}; 2; 1 - \frac{a}{b}, \frac{b - a}{\left( 1 - a \right) b} \right) \right]^{\frac{1}{p}}.
					\end{aligned}
				\end{equation}
			\end{theorem}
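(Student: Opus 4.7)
The strategy is to apply Lemma \ref{MainLemmaUnified} to the characteristic function $f = \chi_{(a,b)}$ and then recast both sides as Appell $F_1$ integrals through a single substitution. With this choice of $f$ the lemma reads
$$\int_a^b t^{\frac{\eta_1 - H(c) - 1}{2}} (1-ct)_+^{\frac{\eta_2 - H(c) - 1}{2}}\, dt \leq K \left(\int_a^b t^{\frac{p\eta_1 - H(c) - 1}{2}} (1-ct)_+^{\frac{p\eta_2 - H(c) - 1}{2}}\, dt \right)^{\frac{1}{p}},$$
and the hypothesis $0 < a < b < 1/(1-H(-c))$ ensures $1-ct > 0$ on $(a,b)$, so the positive-part symbol can be dropped.

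The key substitution is $t = a/(1-u\lambda)$ with $\lambda := 1 - a/b$, which is a smooth bijection of $(0,1)$ onto $(a,b)$ with $dt = a\lambda(1-u\lambda)^{-2}\,du$. Under it $t^\rho = a^\rho(1-u\lambda)^{-\rho}$, and a short algebraic step yields $1 - ct = (1-ca)(1-u\mu)/(1-u\lambda)$ with $\mu := (b-a)/((1-ca)b)$. Collecting terms, $(1-u\lambda)$ is raised to the combined power $-(1 + (\eta_1+\eta_2)/2 - H(c))$ and $(1-u\mu)$ to the power $-(H(c)+1-\eta_2)/2$. Matching against the Euler representation \eqref{Appell1Function} identifies the LHS integral as
$$a^{\frac{\eta_1-H(c)+1}{2}}(1-ca)^{\frac{\eta_2-H(c)-1}{2}}\lambda \cdot F_1\Bigl(1;\,1+\tfrac{\eta_1+\eta_2}{2}-H(c),\,\tfrac{H(c)+1-\eta_2}{2};\,2;\,\lambda,\mu\Bigr),$$
and analogously for the RHS with $\eta_i$ replaced by $p\eta_i$. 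Inserting these into the lemma, taking the $p$-th root on the right, and moving the residual powers of $a$, $(1-ca)$, and $\lambda$ to the left collapses the $a$-exponent to $(1-H(c))(p-1)/(2p) = (1-H(c))/(2p')$, the $(1-ca)$-exponent to $-(H(c)+1)/(2p')$, and the $\lambda$-exponent to $1/p'$, which is exactly \eqref{ANotZeroUnified}.

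The special cases follow by specialisation: setting $c=-1,0,1$ in \eqref{ANotZeroUnified} with $H(-1)=0$ and $H(0)=H(1)=1$, and reading off $1-ca$ and $\mu$, produces \eqref{ANotZeroCMinus1}, \eqref{ANotZeroCZero1}, and \eqref{ANotZeroC1} respectively; for $c=0$ the two Appell arguments coincide. The ${}_2F_1$ version \eqref{ANotZeroCZero2} comes from \eqref{ANotZeroCZero1} via the classical collapse $F_1(\alpha;\beta_1,\beta_2;\gamma;x,x) = {}_2F_1(\alpha,\beta_1+\beta_2;\gamma;x)$ applied to both sides, with $\beta_1+\beta_2 = 1+\eta_1/2$, possibly combined with \eqref{HypergeometricTransformationVariable} to bring the resulting expression into the precise displayed form.

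The main obstacle I anticipate is the bookkeeping of exponents: the three $(1-u\lambda)$ contributions coming from $t^\rho$, $(1-ct)^\sigma$, and the Jacobian must telescope cleanly into a single Appell parameter $\beta_1$, and the transport of the leftover $a$- and $(1-ca)$-powers across the $p$-th root is where the characteristic $1/p'$ exponents on the LHS are produced. A secondary technical point is verifying that $\mu\in(0,1)$ in each case: for $c=1$ this is precisely where the restriction $b<1$ is used, while for $c\in\{-1,0\}$ any $0<a<b$ is admissible since then $\mu\leq\lambda<1$.
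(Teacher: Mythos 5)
Your proposal is correct and follows essentially the same route as the paper: both apply Lemma \ref{MainLemmaUnified} to $f = \chi_{(a,b)}$, identify the two resulting integrals as Euler-type integrals for Appell's $F_1$ with $\alpha = 1$, $\gamma = 2$, and obtain the $1/p'$ exponents from the leftover powers of $a$, $1 - ca$, and $1 - \frac{a}{b}$ after taking the $p$-th root, with the special cases (including the collapse of $F_1$ to ${}_2F_1$ when the two arguments coincide at $c = 0$) read off afterwards. The only difference is in execution: the paper substitutes affinely $t = a + (b-a)v$ and then invokes the transformation \eqref{AppellTransformationVariable} to pass from the argument $1 - \frac{b}{a}$ to $1 - \frac{a}{b}$, whereas your fractional-linear substitution $t = a/(1 - u\lambda)$ produces the arguments $1 - \frac{a}{b}$ and $\frac{b-a}{(1-ca)b}$ directly, in effect absorbing that identity into the change of variables.
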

			\begin{proof}
				Let us fix $0 < a < b \leq \frac{1}{1 - H \left( -c \right)}$, and consider $f = \chi_{\left( a, b \right)}$. Then, from Inequality \eqref{MainLemmaEquationUnified}, we get
			\begin{equation}
				\label{SpecialInequality}
				\int\limits_{a}^{b} t^{\frac{\eta_1 - H \left( c \right) - 1}{2}} \left( 1 - ct \right)^{\frac{\eta_2 - H \left( c \right) - 1}{2}} \mathrm{d}t \leq K \left( \int\limits_{a}^{b} t^{\frac{p \eta_1 - H \left( c \right) - 1}{2}} \left( 1 - ct \right)^{\frac{p \eta_2 - H \left( c \right) - 1}{2}} \mathrm{d}t \right)^{\frac{1}{p}}.
			\end{equation}
			By substituting $t = a + \left( b - a \right) v$ on both sides, and simplifying, we get
			\begin{equation}
				\label{IntermediateEstimateF12F1}
				\begin{aligned}
					&\left( 1 - ca \right)^{\frac{\eta_2 - H \left( c \right) - 1}{2}} a^{\frac{\eta_1 - H \left( c \right) - 1}{2}} \left( b - a \right) \int\limits_{0}^{1} \left( 1 - \left( 1 - \frac{b}{a} \right) v \right)^{\frac{\eta_1 - H \left( c \right) - 1}{2}} \left( 1 - \frac{c \left( b - a \right)}{\left( 1 - ca \right) b} v \right)^{\frac{\eta_2 - H \left( c \right) - 1}{2}} \mathrm{d}v \\
					&\leq K \left[ \left( 1 - ca \right)^{\frac{p \eta_2 - H \left( c \right) - 1}{2}} a^{\frac{p \eta_1 - H \left( c \right) - 1}{2}} \left( b - a \right) \int\limits_{0}^{1} \left( 1 - \left( 1 - \frac{b}{a} \right) v \right)^{\frac{p \eta_1 - H \left( c \right) - 1}{2}} \left( 1 - \frac{c \left( b - a \right)}{\left( 1 - ca \right) b} v \right)^{\frac{p \eta_2 - H \left( c \right) - 1}{2}} \mathrm{d}v \right]^{\frac{1}{p}}.
				\end{aligned}
			\end{equation}
			Now, from Equation \eqref{Appell1Function}, we get
			\begin{align*}
				&\left( 1 - ca \right)^{\frac{\eta_2 - H \left( c \right) - 1}{2}} a^{\frac{\eta_1 - H \left( c \right) + 1}{2}} \left( b - a \right) F_1 \left( 1; \frac{H \left( c \right) + 1 - \eta_1}{2}, \frac{H \left( c \right) + 1 - \eta_2}{2}; 2; 1 - \frac{b}{a}, \frac{c \left( b - a \right)}{\left( 1 - ca \right) b} \right) \\
				&\leq K \left[ \left( 1 - ca \right)^{\frac{p \eta_2 - H \left( c \right) - 1}{2}} a^{\frac{p \eta_1 - H \left( c \right) - 1}{2}} \left( b - a \right) F_1 \left( 1; \frac{H \left( c \right) + 1 - p \eta_1}{2}, \frac{H \left( c \right) + 1 - p \eta_2}{2}; 2; 1 - \frac{b}{a}; \frac{c \left( b - a \right)}{\left( 1 - ca \right) b} \right) \right]^{\frac{1}{p}}.
			\end{align*}
			Equation \eqref{ANotZeroUnified} follows easily from Equation \eqref{AppellTransformationVariable} and some simplification. To obtain Equation \eqref{ANotZeroCMinus1}, we consider $c = -1$. Equation \eqref{ANotZeroCZero1} is obtained from Equation \eqref{ANotZeroUnified} by considering $c = 0$. Further, by using Equations \eqref{Appell1Function} and \eqref{Definition2F1}, we get Equation \eqref{ANotZeroCZero2}. Lastly, Equation \eqref{ANotZeroC1} is obtained by using $c = 1$ in Equation \eqref{ANotZeroUnified}.
			\end{proof}
			Now, we give some inequalities for Gauss' hypergeometric function given in Equation \eqref{Definition2F1}.
			\begin{theorem}
				\label{GaussInequalities}
				For $c \in \left\lbrace -1, 0, 1 \right\rbrace$ and $0 < b < \frac{1}{1 - H \left( -c \right)}$, we have,
				\begin{equation}
					\label{AZeroUnified}
					\begin{aligned}
						&b^{\frac{1 - H \left( c \right)}{2p'}} \left( \frac{\eta_1 - H \left( c \right) - 1}{2} \right) {}_2F_1 \left( \frac{1 + H \left( c \right) - \eta_2}{2}, \frac{\eta_1 - H \left( c \right)+ 1}{2}; \frac{\eta_1 - H \left( c \right) + 3}{2}; cb \right) \\
						&\leq K \left( \frac{p \eta_1 - H \left( c \right) + 1}{2} \right)^{\frac{1}{p}} \left[ {}_2F_1 \left( \frac{1 + H \left( c \right) - p \eta_2}{2}, \frac{p \eta_1 - H \left( c \right) + 1}{2}; \frac{p \eta_1 - H \left( c \right) + 3}{2}; cb \right) \right]^{\frac{1}{p}}.
					\end{aligned}
				\end{equation}
				As special cases, we have for $b > 0$,
				\begin{equation}
				\label{AZeroCminus12}
					\begin{aligned}
						&\left( \frac{b}{1 + b} \right)^{\frac{1}{2p'}} \left( \frac{\eta_2 - 1}{2} \right) {}_2F_1 \left( \frac{1 - \eta_2}{2}, 1; \frac{\eta_1 + 3}{2}; \frac{b}{1 + b} \right) \leq K \left( \frac{p \eta_1 + 1}{2} \right)^{\frac{1}{p}} \left[ {}_2F_1 \left( 1 - \frac{p \eta_2}{2}, 1; \frac{p \eta_1 + 3}{2}; \frac{b}{1 + b} \right) \right]^{\frac{1}{p}}.
					\end{aligned}
				\end{equation}
				Also, for $0 < b < 1$, we have,
				\begin{equation}
					\label{AZeroC1}
					\begin{aligned}
						\left( \frac{\eta_1}{2} - 1 \right) {}_2F_1 \left( 1 - \frac{\eta_2}{2}, \frac{\eta_1}{2}; 1 + \frac{\eta_1}{2}; b \right) \leq K \left( \frac{p \eta_1}{2} \right)^{\frac{1}{p}} \left[ {}_2F_1 \left( 1 - \frac{p \eta_2}{2}, \frac{p \eta_1}{2}; 1 + \frac{p \eta_1}{2}; b \right) \right]^{\frac{1}{p}}
					\end{aligned}
				\end{equation}
			\end{theorem}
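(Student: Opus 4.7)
The plan is to mimic the proof of Theorem \ref{AppellInequalities}, but applied to the degenerate choice $f = \chi_{(0,b)}$ rather than $f = \chi_{(a,b)}$. For $0 < b < \frac{1}{1-H(-c)}$ the weight $(1-ct)_+^{\cdots}$ equals $(1-ct)^{\cdots}$ throughout $(0,b)$, so Lemma \ref{MainLemmaUnified} gives the clean estimate
$$\int_0^b t^{\frac{\eta_1 - H(c) - 1}{2}} (1-ct)^{\frac{\eta_2 - H(c) - 1}{2}}\,\mathrm{d}t \;\leq\; K\left(\int_0^b t^{\frac{p\eta_1 - H(c) - 1}{2}} (1-ct)^{\frac{p\eta_2 - H(c) - 1}{2}}\,\mathrm{d}t\right)^{\!1/p}.$$

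The next step is to rescale by substituting $t = bu$, which turns each side into a Beta-type integral on $(0,1)$ with integrand $u^{\beta-1}(1-u)^{\gamma-\beta-1}(1-(cb)u)^{-\alpha}$ with $\gamma-\beta-1 = 0$, i.e.\ $\gamma = \beta+1$. I would then recognize each such integral as $\frac{\Gamma(\beta)\Gamma(\gamma-\beta)}{\Gamma(\gamma)}\,{}_2F_1(\alpha,\beta;\gamma;cb) = \frac{1}{\beta}\,{}_2F_1(\alpha,\beta;\gamma;cb)$ via \eqref{Definition2F1}. Reading off the parameters $\beta = \frac{\eta_1 - H(c)+1}{2}$, $\gamma = \frac{\eta_1 - H(c)+3}{2}$, $\alpha = \frac{H(c)+1-\eta_2}{2}$ on the LHS (and the analogous primed values $\eta_i \mapsto p\eta_i$ on the RHS), and collecting the resulting powers of $b$, yields the unified inequality \eqref{AZeroUnified}; the $b$-exponent that survives the cancellation is exactly $\frac{1-H(c)}{2p'}$, which is why that prefactor appears only on the LHS.

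For the special cases I would specialize $c$. Taking $c = 1$ (so $H(c) = 1$) directly in \eqref{AZeroUnified} collapses every $1 - H(c)$ to zero and produces \eqref{AZeroC1} on the domain $0 < b < 1$. Taking $c = -1$ (so $H(c) = 0$) gives a ${}_2F_1$ evaluated at the negative argument $-b$; to rewrite it at the argument $\frac{b}{1+b} \in (0,1)$ used in \eqref{AZeroCminus12}, I would apply the Euler transformation \eqref{HypergeometricTransformationVariable} with $z = -b$, which supplies the factor $(1+b)^{-\alpha}$ and converts the second parameter via $\gamma - \beta = 1$. Combining this factor with $b^{1/(2p')}$ produces the $\left(\frac{b}{1+b}\right)^{1/(2p')}$ prefactor on the LHS of \eqref{AZeroCminus12}, and a parallel transformation on the RHS completes the case.

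The main obstacle is purely bookkeeping: tracking how the ratio of the LHS and RHS $b$-exponents collapses to $\frac{1-H(c)}{2p'}$, ensuring the Gamma-function normalizations reduce to the coefficients $\frac{\eta_1 - H(c) \pm 1}{2}$ quoted in the statement, and applying the Euler transformation on both sides consistently in the $c=-1$ case. There is no new analytic input beyond Lemma \ref{MainLemmaUnified} and the standard integral representation of ${}_2F_1$; in particular, the case $c = 0$ is omitted from the statement because the weight $(1-ct)^{\cdots}$ becomes identically one and the resulting moment inequality carries no hypergeometric content.
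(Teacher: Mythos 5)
Your proposal matches the paper's proof essentially step for step: take $f = \chi_{\left(0, b\right)}$ (i.e.\ $a = 0$) in the inequality coming from Lemma \ref{MainLemmaUnified}, substitute $t = bv$, identify both sides as ${}_2F_1$ via the integral representation \eqref{Definition2F1} to get \eqref{AZeroUnified}, and then specialize $c = \pm 1$, using the Euler transformation \eqref{HypergeometricTransformationVariable} at $z = -b$ for the $c = -1$ case \eqref{AZeroCminus12}. The only caveat is bookkeeping of the Gamma-normalization constants (which you already flag), so this is correct and the same approach.
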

			\begin{proof}
				Similar to the proof of Theorem \ref{AppellInequalities}, we consider $f = \chi_{\left( 0, b \right)}$, for $0 < b < \frac{1}{1 - H \left( -c \right)}$. Then, Inequality \eqref{SpecialInequality} becomes (by taking $a = 0$),
				\begin{equation}
					\label{SpecialInequality2}
					\int\limits_{0}^{b} t^{\frac{\eta_1 - H \left( c \right) - 1}{2}} \left( 1 - ct \right)^{\frac{\eta_2 - H \left( c \right) - 1}{2}} \mathrm{d}t \leq K \left( \int\limits_{0}^{b} t^{\frac{p \eta_1 - H \left( c \right) - 1}{2}} \left( 1 - ct \right)^{\frac{p \eta_2 - H \left( c \right) - 1}{2}} \mathrm{d}t \right)^{\frac{1}{p}}.
				\end{equation}
				Next, we substitute $t = bv$ and simplify to obtain
				\begin{equation}
					\label{IntermediateEstimateF12F12}
					\begin{aligned}
						&b^{\frac{\eta_1 - H \left( c \right) + 1}{2}} \int\limits_{0}^{1} v^{\frac{\eta_1 - H \left( c \right) - 1}{2}} \left( 1 - cbv \right)^{\frac{\eta_2 - H \left( c \right) - 1}{2}} \mathrm{d}v \leq K \left[ b^{\frac{p \eta_1 - H \left( c \right) + 1}{2}} \int\limits_{0}^{1} v^{\frac{p \eta_1 - H \left( c \right) - 1}{2}} \left( 1 - cbv \right)^{\frac{p \eta_2 - H \left( c \right) - 1}{2}} \mathrm{d}v \right]^{\frac{1}{p}}.
					\end{aligned}
				\end{equation}
				Using the integral form of hypergometric function given in Equation \eqref{Definition2F1}, we get Equation \eqref{AZeroUnified}.
				
				Now, we notice that for $c = -1$, we get
				\begin{equation}
					\label{AZeroCMinus11}
					\begin{aligned}
						&b^{\frac{1}{2p'}} \left( \frac{\eta_2 - 1}{2} \right) {}_2F_1 \left( \frac{1 - \eta_2}{2}, \frac{\eta_1 - 1}{2}; \frac{\eta_1 + 3}{2}; -b \right) \leq K \left( \frac{p \eta_1 + 1}{2} \right)^{\frac{1}{p}} \left[ {}_2F_1 \left( \frac{1 - p \eta_2}{2}, \frac{p \eta_1 + 1}{2}; \frac{p \eta_1 + 3}{2}; -b \right) \right]^{\frac{1}{p}}.
					\end{aligned}
				\end{equation}
				Equation \eqref{AZeroCminus12} follows from an application of Equation \eqref{HypergeometricTransformationVariable}. Equation \eqref{AZeroC1} is obtained by using $c = 1$ in Equation \eqref{AZeroUnified}.
			\end{proof}
	\section{Conclusion}
		\label{ConclusionSection}
		In this article, we have seen a differential geometry perspective to obtain mapping properties of the $k$-dimensional totally-geodesic Radon transform. We have studied the simply connected spaces of constant curvature, which are necessarily isometric to one of the model spaces ($\mathbb{R}^n$, $\mathbb{H}^n$, or $\mathbb{S}^n$), where we have seen the naturality of the end-point estimates given in \cite{Duoandikoetxea} and \cite{KumarRay} depending on the curvature. This ``naturality" helps us unify the proofs of the end-point estimates. We comment here that the important link in unifying the end-point estimates was a unified Pythagoras' theorem. We emphasize here that at the moment, we do not claim to have found the best possible constants. It remains open to get the sharp bounds for the end-point estimates on all three spaces of constant curvature.
		
		The crux of the unified proof lied in Lemma \ref{MainLemmaUnified}, which has an application to obtaining certain inequalities of special functions. Particularly, we have seen some inequalities concerning Appell's function of the first kind and Gauss' hypergeometric function. We would like to mention here that these functions come up over and over when studying the spaces of constant curvature. The simplest example of their occurrence would be in evaluating the measure of a geodesic ball of finite radius. The measure can be expressed in terms of Gauss' hypergeomtric function. On the other hand, the measure of an annulus in the hyperbolic space (or the sphere) can be evaluated in terms of Appell's function of first kind. These observations warrant the need for a study linking various properties of hypergeometric functions to their applications in understanding the mapping properties of the $k$-plane transform.
		
		It also remains to be seen whether one can obtain weighted end-point estimates and weighted $L^p$-$L^q$ estimates for the $k$-plane transform on spaces of constant curvature. On the Euclidean space, this work was carried out by Kumar and Ray in \cite{KumarRayWE}. It would be interesting to get necessary and sufficient conditions for weighted boundedness of the totally-geodesic $k$-plane transform on the real hyperbolic space and the sphere.

\end{document}